\documentclass{amsart}

\usepackage[T1]{fontenc}
\usepackage{enumerate, amsmath, amsfonts, amssymb, amsthm, mathrsfs, thmtools, wasysym, graphics, graphicx, xcolor, url, hyperref, hypcap, shuffle, xargs, multicol, overpic, pdflscape, multirow, hvfloat, minibox, accents, array, multido, xifthen, a4wide, ae, aecompl, blkarray, pifont, mathtools, etoolbox, dsfont}
\usepackage{marginnote}
\hypersetup{colorlinks=true, citecolor=darkblue, linkcolor=darkblue}
\usepackage[all]{xy}
\usepackage[bottom]{footmisc}
\usepackage{tikz}
\usepackage{tikz-qtree}
\usetikzlibrary{trees, decorations, decorations.markings, shapes, arrows, matrix, calc, fit, intersections, patterns, angles, snakes}
\usepackage[external]{forest}
%\tikzexternalize
\graphicspath{{figures/}{figures/nodes/}}
\makeatletter\def\input@path{{figures/}}\makeatother
\usepackage{caption}
\captionsetup{width=\textwidth}
 % possibility to have one page of pictures
 % possibility to have one page of pictures
\usepackage[noabbrev,capitalise]{cleveref}
\usepackage[export]{adjustbox}
\usepackage{ulem}\normalem

%%%%%%%%%%%%%%%%%%%%%%%%%%%%%%%%%%%%%%

% theorems
\newtheorem{theorem}{Theorem}%[section]
\newtheorem{corollary}[theorem]{Corollary}
\newtheorem{proposition}[theorem]{Proposition}
\newtheorem{lemma}[theorem]{Lemma}

\newtheorem*{theorem*}{Theorem}%[section]

\theoremstyle{definition}

\newtheorem{example}[theorem]{Example}
\newtheorem{remark}[theorem]{Remark}

\crefname{notation}{Notation}{Notations}
\crefname{problem}{Problem}{Problems}

% math special letters
 % reals
\newcommand{\N}{\mathbb{N}} % naturals
 % integers
 % complex
 % set of integers
 % hyperplane
 % field
 % alternating group
 % signed symmetric group
 % algebra
\newcommand{\cC}{\mathcal{C}} % collection
 % ground set
 % underline set
 % underline set
 % underline set
 % overline set
 % underline ground set
 % bold letters
\newcommand{\bb}[1]{\mathbb{#1}} % bb letters
 % frak letters
 % hat letters

\def\bQ{\mathbb Q}

% math commands
\newcommand{\set}[2]{\left\{ #1 \;\middle|\; #2 \right\}} % set notation
 % big set notation
 % Big set notation
 % set notation
 % small set minus
 % dot product
 % symmetric difference
 % the all one vector
\newcommand{\eqdef}{\mbox{\,\raisebox{0.2ex}{\scriptsize\ensuremath{\mathrm:}}\ensuremath{=}\,}} % :=
 % =:
 % simplex
 % imply sign
 % transpose matrix
 % complement
\newcommand{\qbinom}[2]{\genfrac{[}{]}{0pt}{}{#1}{#2}_q} % q-binomial

% operators
 % convex hull
 % linear span
 % cone hull
 % inversions
\DeclareMathOperator{\des}{des} % descents
\DeclareMathOperator{\asc}{asc} % ascents
\DeclareMathOperator{\agree}{agr} % agree
\DeclareMathOperator{\can}{can} % canopy
% FC: To avoid “Command \val already defined.”
\let\val\relax
\DeclareMathOperator{\val}{val} % valleys
\DeclareMathOperator{\df}{df} % double falls
 % free descents
 % free
 % tied
 % constrained
\DeclareMathOperator{\BB}{bb} % Bernardi-Bonichon map
\DeclareMathOperator{\intNodes}{inodes} % intermediate nodes

% others
\newcommand{\ie}{\textit{i.e.}~} % id est
\newcommand{\eg}{\textit{e.g.}~} % exempli gratia
 % exempli gratia
 % a priori
 % vice versa
 % versus
 % also known as
 % per se
 % th for ordinals
 % st for ordinals
\definecolor{darkblue}{rgb}{0,0,0.7} % darkblue color
\definecolor{green}{RGB}{57,181,74} % darkblue color
\definecolor{violet}{RGB}{147,39,143} % darkblue color
\newcommand{\darkblue}{\color{darkblue}} % darkblue command
\newcommand{\defn}[1]{\textsl{\darkblue #1}} % emphasis of a definition
 % paragraph
 % possibility to have one page of pictures
 % possibility to have one page of pictures
%\renewcommand\labelitemi{$\diamond$} % redefine itemize default symbol
 % image aligned top
\newcommand{\OEIS}[1]{\cite[{\rm \href{http://oeis.org/#1}{\texttt{#1}}}]{OEIS}}
% FC: I don't want to hack and give a single arguement.

% marginal comments
\usepackage{todonotes}

% lattices
\newcommand{\meet}{\wedge} % meet
\newcommand{\join}{\vee} % join
\newcommand{\bigMeet}{\bigwedge} % meet
\newcommand{\bigJoin}{\bigvee} % join
\newcommandx{\projDown}[1][1={}]{\smash{\pi_\downarrow^{#1}}} % down projection map
\newcommandx{\projUp}[1][1={}]{\smash{\pi^\uparrow_{#1}}} % up projection map
 % congruence
\newcommand{\Tam}{\mathrm{Tam}} % Tamari lattice

% others
\newcommand{\bbA}{\bb{A}}
\newcommand{\Ac}{A^{\circ}}

% formating the table of contents
\setcounter{tocdepth}{4}
\makeatletter
\def\l@part{\@tocline{1}{8pt}{0pc}{}{}}
\def\l@section{\@tocline{1}{4pt}{0pc}{}{}}
\makeatother
\let\oldtocpart=\tocpart
\renewcommand{\tocpart}[2]{\sc\large\oldtocpart{#1}{#2}}
\let\oldtocsection=\tocsection
\renewcommand{\tocsection}[2]{\bf\oldtocsection{#1}{#2}}
\let\oldtocsubsubsection=\tocsubsubsection
\renewcommand{\tocsubsubsection}[2]{\quad\oldtocsubsubsection{#1}{#2}}

%%%%%%%%%%%%%%%%%%%%%%%%%%%%%%%%%%%%%%

\title{Refined product formulas for Tamari intervals}

\thanks{AB \& FC were partially supported by the French grant DeRerumNatura (ANR-19-CE40-0018), and by the French--Austrian project EAGLES (ANR-22-CE91-0007 \& FWF I6130-N). VP was partially supported by the Spanish project PID2022-137283NB-C21 of MCIN/AEI/10.13039/501100011033 / FEDER, UE, by the Spanish--German project COMPOTE (AEI PCI2024-155081-2 \& DFG 541393733), by the Severo Ochoa and María de Maeztu Program for Centers and Units of Excellence in R\&D (CEX2020-001084-M), by the Departament de Recerca i Universitats de la Generalitat de Catalunya (2021 SGR 00697), and by the French--Austrian project PAGCAP (ANR-21-CE48-0020 \& FWF I 5788).}

\author{Alin Bostan}
\address[AB]{Inria, Sorbonne Université, LIP6, Paris, France}
\email{alin.bostan@inria.fr}
\urladdr{\url{https://mathexp.eu/bostan/}}

\author{Frédéric Chyzak}
\address[FC]{Inria, Palaiseau, France}
\email{frederic.chyzak@inria.fr}
\urladdr{\url{https://mathexp.eu/chyzak/}}

\author{Vincent Pilaud}
\address[VP]{Universitat de Barcelona \& Centre de Recerca Matemàtica, Barcelona}
\email{vincent.pilaud@ub.edu}
\urladdr{\url{https://www.ub.edu/comb/vincentpilaud/}}

%%%%%%%%%%%%%%%%%%%%%%%%%%%%%%%%%%%%%%

\begin{document}

\begin{abstract}
We provide surprising product formulas for the $f$-vectors of the canonical complexes of the Tamari lattices and of the cellular diagonals of the associahedra.
\end{abstract}

\vspace*{-1.7cm}

\maketitle

%%%%%%%%%%%%%%%%%%%%%%%%%%%%%%%%%%%%%%

\section*{Introduction}

Consider the \defn{Tamari lattice}~$\Tam(n)$, whose elements are the binary trees with~$n$ nodes, and whose cover relations are given by right rotations~\cite{Tamari}.
An \defn{interval} (or \defn{relation}) is a pair~$S \le T$ in~$\Tam(n)$.
For a binary tree~$T$, we denote by~$\des(T)$ (resp.~by~$\asc(T)$) the number of binary trees covered by~$T$ (resp.~covering~$T$) in the Tamari lattice.
In other words, $\des(T)$ (resp.~$\asc(T)$) is the number of edges~$i \to j$ in~$T$ with~$i > j$ (resp.~with~$i < j$) when we orient the edges of~$T$ towards its root and label the nodes of~$T$ in inorder (meaning that we recursively label the left subtree of~$T$, then the root, and then recursively label the right subtree of~$T$).
The purpose of this paper is to prove the following two surprising formulas, whose first few values are gathered in \cref{table:fVectorCanonicalComplex,table:fVectorDiagonal}.

\begin{theorem}
\label{thm:fVectorCanonicalComplex}
For any~$n,k \in \N$, the number $a_{n,k}$ of intervals~$S \le T$ of the Tamari lattice~$\Tam(n)$ such that~$\des(S) + \asc(T) = k$ is given by
\[
a_{n,k} = \frac{2}{n(n+1)} \binom{n+1}{k+2} \binom{3n}{k}.
\]
\end{theorem}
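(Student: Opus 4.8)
The plan is to reduce \cref{thm:fVectorCanonicalComplex} to an identity between generating functions, to prove that identity by a functional-equation argument in the spirit of the Bousquet-M\'elou--Fusy--Pr\'eville-Ratelle enumeration of Tamari intervals, and then to extract the coefficients in closed form by Lagrange inversion. The first step is to choose a model of Tamari intervals in which the statistic $\des(S)+\asc(T)$ is both transparent and stable under recursive surgery; I would use the encoding of an interval $[S,T]$ of $\Tam(n)$ by its interval-poset on $[n]$ (Ch\^atel--Pons), equivalently by the pair $P_S\le P_T$ of nested Dyck paths it defines. The key preliminary lemma is that the interval-poset of $[S,T]$ has exactly $\des(S)+\asc(T)$ cover relations; moreover one can split them into the $\des(S)$ cover relations coming from $S$ and the $\asc(T)$ cover relations coming from $T$. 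In the path language this reads $\des(S)+\asc(T)=n-1-\bigl(c(P_S)-c(P_T)\bigr)$, where $c(\cdot)$ denotes the number of contacts with the axis. As sanity checks, this yields $0$ cover relations exactly for the full interval $[\hat{0},\hat{1}]$, it yields $n-1$ cover relations exactly for the one-element intervals, and it forces $\des(S)+\asc(T)\le n-1$, matching both the vanishing $a_{n,k}=0$ for $k\ge n$ and the value $a_{n,0}=1$.

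The heart of the argument is a functional equation for the refined series $F(x,y;u)=\sum_{n,k}a_{n,k}\,x^ny^k$ enriched with one catalytic variable $u$. Peeling the interval-poset along the first return of the lower path $P_S$ to the axis (or along the component containing the largest label~$n$) should produce an equation of the shape $F(x,y;u)=\Phi\bigl(x,y,u,F(x,y;u),F(x,y;1)\bigr)$. The step I expect to be the main obstacle is to engineer this decomposition so that $\des(S)+\asc(T)$ splits additively over the pieces while a \emph{single} catalytic variable still suffices; granting this, the kernel method (or the quadratic method) produces an explicit algebraic expression for $F(x,y;1)$ admitting a rational parametrization, which specializes at $y=1$ to Chapoton's cubic equation for $\sum_n\tfrac{2}{n(n+1)}\binom{4n+1}{n-1}x^n$.

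Finally, from a rational parametrization of $F(x,y;1)$ one reads off $a_{n,k}=[x^ny^k]F(x,y;1)$ by bivariate Lagrange--B\"urmann inversion; the product shape $\tfrac{2}{n(n+1)}\binom{n+1}{k+2}\binom{3n}{k}$ is precisely what such an inversion returns, so this last step is routine, and it can in any case be certified a posteriori by creative telescoping, by checking that both sides satisfy the same recurrences in $n$ and in $k$ together with finitely many initial values. A useful global check throughout is that summing over $k$ must recover Chapoton's count of all intervals of $\Tam(n)$, which here is just the Vandermonde convolution $\sum_k\binom{n+1}{k+2}\binom{3n}{k}=\binom{4n+1}{n-1}$; and should a closed bivariate refinement of Tamari intervals by the two contact statistics already be available in the literature, that same Vandermonde-type summation would turn it directly into the claimed formula.
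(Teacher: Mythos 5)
Your overall route---recursive decomposition of intervals, a functional equation with one catalytic variable, the quadratic/kernel method, a rational parametrization, and Lagrange inversion---is exactly the skeleton of the paper's first proof (\cref{sec:graftingDecompositions,sec:LagrangeInversion}), and your endgame (Vandermonde check $\sum_k\binom{n+1}{k+2}\binom{3n}{k}=\binom{4n+1}{n-1}$, certification by creative telescoping, use of a canopy-type bivariate refinement from the literature) matches what the paper actually does, including the alternative derivation from the Fusy--Humbert formula in \cref{subsec:triangulations}. However, there are two genuine gaps. First, your ``key preliminary lemma'' is false as stated: the identity $\des(S)+\asc(T)=n-1-\bigl(c(P_S)-c(P_T)\bigr)$ fails already in $\Tam(3)$. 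Take the cover relation $S\lessdot T$ in the middle of the long side of the pentagon, i.e.\ $P_S=UUDDUD$ and $P_T=UUDUDD$: here $\des(S)+\asc(T)=\df(P_S)+\val(P_T)=1+1=2$, while $c(P_S)-c(P_T)=2-1=1$, so your formula gives $1$. Your sanity checks pass only because they sit at the degenerate extremes ($[\hat0,\hat1]$ and singletons), where many wrong formulas also degenerate correctly. The correct complementary reformulation is via the canopy, not contacts: $\des(S)+\asc(T)$ equals the number of canopy \emph{agreements} of $S$ and $T$, i.e.\ $n-1$ minus the number of positions where $\can(S)_p={-}$ while $\can(T)_p={+}$ (\cref{lem:canopy}); contacts instead record $\ell(S)$, which is the catalytic statistic, not the counted one.

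Second, and more importantly, the heart of the argument---the functional equation itself---is exactly the step you flag and then ``grant''. The paper resolves it in \cref{prop:quadraticEquationA} by working directly with the grafting decomposition $S=S_0/\cdots/S_\ell$, $T=T_0/\cdots/T_\ell$ and by tracking precisely how $\des$ and $\asc$ transform under the two surgeries (composing an indecomposable interval with an arbitrary one, which creates one new ascent of $T$, and the map $(S,T)\mapsto(S_k',Y\backslash T)$, which creates one new descent of $S$ except in one boundary case); this is what produces the term $z\,(u^2A_u-A_1)/(u-1)$ with a single catalytic variable $u=\ell(S)$, and hence the quartic $P(t,z,X)$ of \cref{prop:polynomialEquationA}. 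Without carrying out this bookkeeping (or the equivalent one on interval-posets/nested paths), the proof does not exist yet; and if you set it up starting from the erroneous contact identity above, the resulting equation would enumerate the wrong statistic. Everything downstream of the correct quartic (genus-zero parametrization $t=s/((s+1)(sz+1)^3)$, $A=S-zS^2-zS^3$, and Lagrange inversion) is indeed routine, as you say.
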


\begin{theorem}
\label{thm:fVectorDiagonal}
For any~$n,k \in \N$, the sum $b_{n,k}$ of the binomial coefficients~$\binom{\des(S) + \asc(T)}{k}$ over all intervals~$S \le T$ of the Tamari lattice~$\Tam(n)$ is given by
\[
b_{n,k} = \sum_{\ell = k}^{n-1} a_{n,\ell} \binom{\ell}{k} = \frac{2}{(3n+1)(3n+2)} \binom{n-1}{k} \binom{4n+1-k}{n+1}.
\]
\end{theorem}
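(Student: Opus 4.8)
The first equality in the statement holds by definition. Indeed, $a_{n,\ell}$ is the number of intervals $S \le T$ of~$\Tam(n)$ with $\des(S) + \asc(T) = \ell$, so grouping the intervals $S \le T$ according to the value of $\des(S) + \asc(T)$ yields $b_{n,k} = \sum_{S \le T}\binom{\des(S) + \asc(T)}{k} = \sum_{\ell \ge 0} a_{n,\ell}\binom{\ell}{k}$; in this sum only the terms with $k \le \ell \le n-1$ contribute, since $\binom{\ell}{k}$ vanishes for $\ell < k$ and the factor $\binom{n+1}{\ell+2}$ in \cref{thm:fVectorCanonicalComplex} vanishes for $\ell > n-1$. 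So the content of the theorem is the closed-form evaluation of this sum, and the plan is to substitute \cref{thm:fVectorCanonicalComplex} and reduce to a single binomial identity.

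Plugging in $a_{n,\ell} = \frac{2}{n(n+1)}\binom{n+1}{\ell+2}\binom{3n}{\ell}$, it suffices to prove
\[
\sum_{\ell \ge 0}\binom{n+1}{\ell+2}\binom{3n}{\ell}\binom{\ell}{k} = \binom{3n}{k}\binom{4n+1-k}{n-1-k},
\]
and then to check that $\frac{2}{n(n+1)}$ times the right-hand side equals $\frac{2}{(3n+1)(3n+2)}\binom{n-1}{k}\binom{4n+1-k}{n+1}$. For the displayed identity I would first use the trinomial revision $\binom{3n}{\ell}\binom{\ell}{k} = \binom{3n}{k}\binom{3n-k}{\ell-k}$ to pull the factor $\binom{3n}{k}$ out of the sum, then substitute $m = \ell-k$ and rewrite $\binom{n+1}{\ell+2} = \binom{n+1}{n-1-\ell} = \binom{n+1}{n-1-k-m}$ by symmetry of the binomial coefficients; what remains, $\sum_{m \ge 0}\binom{n+1}{n-1-k-m}\binom{3n-k}{m}$, is exactly the Vandermonde convolution and equals $\binom{(n+1)+(3n-k)}{n-1-k} = \binom{4n+1-k}{n-1-k}$.

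The final step is a one-line manipulation of factorials: using $\binom{4n+1-k}{n-1-k} = \frac{(4n+1-k)!}{(n-1-k)!\,(3n+2)!}$ and $(3n+2)! = (3n+2)(3n+1)(3n)!$, one sees that $\frac{2}{n(n+1)}\binom{3n}{k}\binom{4n+1-k}{n-1-k}$ equals $\frac{2\,(4n+1-k)!}{n(n+1)(3n+1)(3n+2)\,k!\,(3n-k)!\,(n-1-k)!}$, and the same expression comes out of $\frac{2}{(3n+1)(3n+2)}\binom{n-1}{k}\binom{4n+1-k}{n+1}$ after using $\binom{4n+1-k}{n+1} = \frac{(4n+1-k)!}{(n+1)!\,(3n-k)!}$ and $(n+1)! = (n+1)n\,(n-1)!$. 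I do not expect any real obstacle in this theorem: granting \cref{thm:fVectorCanonicalComplex}, it is a mechanical corollary whose only move is to clear the binomial $\binom{\ell}{k}$ and shift the summation index so as to recognize a plain Vandermonde convolution. The single point to watch is the range of validity: the identity is asserted for $0 \le k \le n-1$, and for $k \ge n$ both sides vanish trivially, the left-hand sum being empty and $\binom{n-1}{k}$ being zero.
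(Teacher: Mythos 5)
Your proof is correct and follows essentially the same route as the paper: the paper also derives \cref{thm:fVectorDiagonal} from \cref{thm:fVectorCanonicalComplex} by reducing the sum to a binomial identity handled via trinomial revision ($\binom{r}{\ell}\binom{\ell}{k}=\binom{r}{k}\binom{r-k}{r-\ell}$ with $r=3n$) followed by Chu--Vandermonde, exactly as you do. The only cosmetic difference is that the paper states and proves the identity with $3n$ replaced by a free parameter $r$ (\cref{prop:generalizedBinomialIdentity}), which changes nothing in the argument.
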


\begin{table}[t]
	\begin{tabular}{c|ccccccccc|c}
		$n \backslash k$ & $0$ & $1$ & $2$ & $3$ & $4$ & $5$ & $6$ & $7$ & $8$ & $\Sigma$\\
		\hline
		$1$ & $1$ &&&&&&&&& $1$ \\
		$2$ & $1$ & $2$ &&&&&&&& $3$ \\
		$3$ & $1$ & $6$ & $6$ &&&&&&& $13$ \\
		$4$ & $1$ & $12$ & $33$ & $22$ &&&&&& $68$ \\
		$5$ & $1$ & $20$ & $105$ & $182$ & $91$ &&&&& $399$ \\
		$6$ & $1$ & $30$ & $255$ & $816$ & $1020$ & $408$ &&&& $2530$ \\
		$7$ & $1$ & $42$ & $525$ & $2660$ & $5985$ & $5814$ & $1938$ &&& $16965$ \\
		$8$ & $1$ & $56$ & $966$ & $7084$ & $24794$ & $42504$ & $33649$ & $9614$ && $118668$ \\
		$9$ & $1$ & $72$ & $1638$ & $16380$ & $81900$ & $215280$ & $296010$ & $197340$ & $49335$ & $857956$
	\end{tabular}
	\caption{The first few values of $a_{n,k} = \frac{2}{n(n+1)} \binom{n+1}{k+2} \binom{3n}{k}$. Note that the first column is~$1$, the second column is~$n(n-1)$~\OEIS{A002378}, the last three diagonals are~\OEIS{A004321}, \OEIS{A006630}, and~\OEIS{A000139}, and the column sum is~\OEIS{A000260}. The $n$th row gives the $f$-vector of the canonical complex of the Tamari lattice~$\Tam(n)$.}
	\label{table:fVectorCanonicalComplex}
\end{table}

\begin{table}
	\begin{tabular}{c|ccccccccc}
		$n \backslash k$ & $0$ & $1$ & $2$ & $3$ & $4$ & $5$ & $6$ & $7$ & $8$ \\
		\hline
		$1$ & $1$ \\
		$2$ & $3$ & $2$ \\
		$3$ & $13$ & $18$ & $6$ \\
		$4$ & $68$ & $144$ & $99$ & $22$ \\
		$5$ & $399$ & $1140$ & $1197$ & $546$ & $91$ \\
		$6$ & $2530$ & $9108$ & $12903$ & $8976$ & $3060$ & $408$ \\
		$7$ & $16965$ & $73710$ & $131625$ & $123500$ & $64125$ & $17442$ & $1938$ \\
		$8$ & $118668$ & $604128$ & $1302651$ & $1540770$ & $1078539$ & $446292$ & $100947$ & $9614$ \\
		$9$ & $857956$ & $5008608$ & $12660648$ & $18086640$ & $15958800$ & $8898240$ & $3058770$ & $592020$ & $49335$
	\end{tabular}
	\caption{The first few values of $b_{n,k} = \frac{2}{(3n+1)(3n+2)} \binom{n-1}{k} \binom{4n+1-k}{n+1}$. Note that the first column is~\OEIS{A000260} while the diagonal is~\OEIS{A000139}. The $n$th row gives the $f$-vector of the cellular diagonal of the $(n-1)$-dimensional associahedron.}
	\label{table:fVectorDiagonal}
\end{table}

These formulas are of interest for two main reasons.
First, they count the faces of two complexes defined from the Tamari lattice and the associahedron: $(a_{n,k})_{0 \le k < n}$ is the $f$-vector of the \defn{canonical complex} of the Tamari lattice~\cite{Reading-arcDiagrams, Barnard, AlbertinPilaud}, while~$(b_{n,k})_{0 \le k < n}$ is the $f$-vector of the \defn{cellular diagonal of the associahedron}~\cite{SaneblidzeUmble-diagonals, MarklShnider,  Loday-diagonal, MasudaThomasTonksVallette, LaplanteAnfossi}.
Second, they have relevant specializations: on the one hand, $\sum_{\ell = 0}^{n-1} a_{n,\ell} = b_{n,0} = \frac{2}{(3n+1)(3n+2)} \binom{4n+1}{n+1}$ counts all Tamari intervals~\cite{Chapoton1} and \defn{rooted $3$-connected planar triangulations with $2n+2$ faces} (see~\cite{BernardiBonichon, FangFusyNadeau} for bijections, and~\OEIS{A000260} for more informations), and on the other hand, ${a_{n,n-1} = b_{n,n-1} = \frac{2}{n(n+1)} \binom{3n}{n-1}}$ counts \defn{synchronized Tamari intervals},  \defn{rooted non-separable planar maps with $n+1$ edges}, and \defn{$2$-stack sortable permutations of~$[n]$}, among others (see \cite{PrevilleRatelleViennot,FangPrevilleRatelle} and~\OEIS{A000139} for more informations).

The paper is organized as follows.
In \cref{sec:canonicalComplexDiagonalAssociahedron}, we present the connection between~$a_{n,k}$ and the canonical complex of the Tamari lattice, and the connection between~$b_{n,k}$ and the cellular diagonal of the associahedron.
In \cref{sec:analyticProof}, we show that the analytic approach of~\cite{Chapoton1, Chapoton2} can be adapted\footnote{We note that several analytic approaches (via creative telescoping, via binomial sums, or via a holonomic recurrence system) are actually possible, as was discussed in a preliminary longer version of this paper~\cite{BostanChyzakPilaud}.} to prove~\cref{thm:fVectorCanonicalComplex,thm:fVectorDiagonal}.
Finally, in \cref{sec:bijections}, we discuss bijective considerations\footnote{Further considerations, in particular on the (im)possibility to refine the formulas of~\cref{thm:fVectorCanonicalComplex,thm:fVectorDiagonal} by natural combinatorial parameters can be found in~\cite{BostanChyzakPilaud}.} and note that bijective proofs of~\cref{thm:fVectorCanonicalComplex,thm:fVectorDiagonal} can be derived from~\cite{FusyHumbert, FangFusyNadeau}.

%%%%%%%%%%%%%%%%%%%%%%%%%%%%%%%%%%%%%%

\section{Canonical complex of the Tamari lattice and diagonal of the associahedron}
\label{sec:canonicalComplexDiagonalAssociahedron}

In this section, we interpret the numbers~$a_{n,k}$ in terms of the canonical complex of the Tamari lattice (\cref{subsec:canonicalComplex}) and the numbers~$b_{n,k}$ in terms of the cellular diagonal of the associahedron (\cref{subsec:diagonalAssociahedron}).
These two interpretations are our motivations to study~$a_{n,k}$ and~$b_{n,k}$, but are not used beyond this section.
Rather than giving all details of the definitions of these objects, we thus prefer to refer to the original articles and only gather the essential material to make the connection.

\subsection{Canonical complex of the Tamari lattice}
\label{subsec:canonicalComplex}

A lattice~$(L, \le, \meet, \join)$ is \defn{join semidistributive} when $x \join y = x \join z$ implies~$x \join (y \meet z) = x \join y$.
Any~$x \in L$ then admits a \defn{canonical join representation}, which is a minimal irredundant representation~$x = \bigJoin J$ (for the order~$J \le J'$ if for any~$j \in J$, there is~$j' \in J'$ with~$j \le j'$).
The \defn{canonical join complex}~\cite{Reading-arcDiagrams, Barnard} of a join semidistributive lattice~$L$ is the simplicial complex of canonical join representations of the elements of~$L$.
Note that the dimension of the face of the canonical complex corresponding to an element~$x$ of~$L$ is the size of its canonical join representation, which is the number of elements covered by~$x$ in~$L$.
We define dually meet semidistributive lattices and their canonical meet complexes, and say that~$L$ is \defn{semidistributive} when it is both join and meet semidistributive.
The \defn{canonical complex}~\cite{AlbertinPilaud} of a semidistributive lattice~$L$ is the simplicial complex whose faces are~$J \sqcup M$ where~$x = \bigJoin J$ is the canonical join representation and~$y = \bigMeet M$ is the canonical meet representation for an interval~$x \le y$ in~$L$.
Note that the dimension of the face of the canonical complex corresponding to an interval~$x \le y$ is the number of elements covered by~$x$ in~$L$ plus the number of elements covering~$y$ in~$L$.
Observe also that the canonical complex is flag, meaning that it is the clique complex of its graph.

\begin{example}
\begin{figure}
	\centerline{\includegraphics[scale=.58]{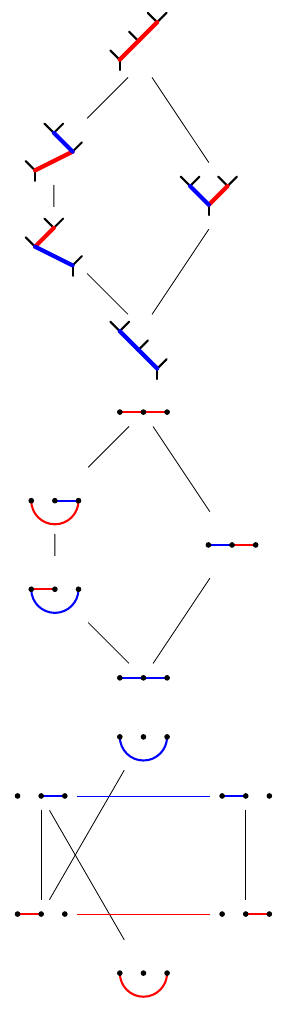}\quad\includegraphics[scale=.58]{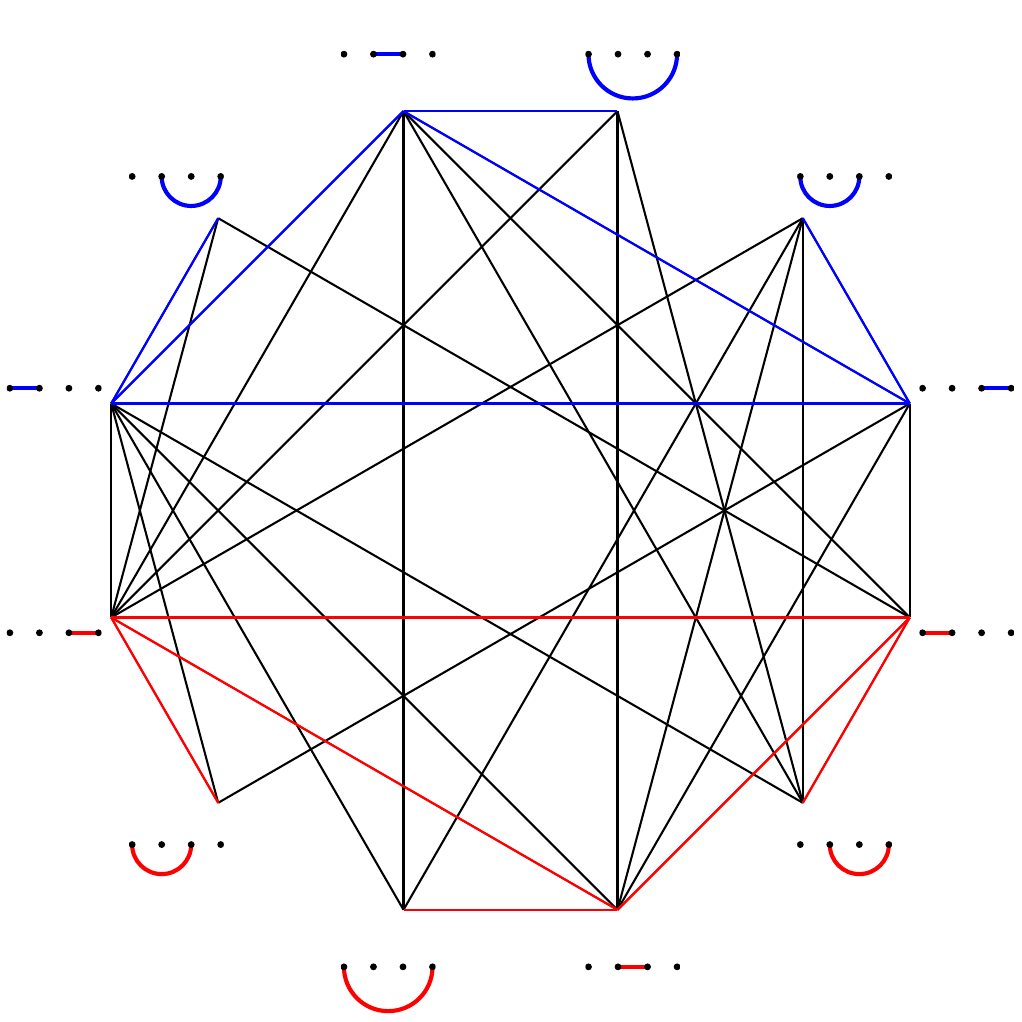}}
	\caption{The canonical complex of the Tamari lattice. Left: The Tamari lattice~$\Tam(2)$ seen on binary trees (top) and on semi-crossing arc bidiagrams (middle), and the canonical complex of~$\Tam(2)$ (bottom), with \mbox{$f$-vector}~$(1, 6, 6)$. Right: The canonical complex of~$\Tam(3)$, with \mbox{$f$-vector}~$(1,12, 33, 22)$.}
	\label{fig:canonicalComplex}
\end{figure}
The Tamari lattice is semidistributive.
Its join (resp.~meet) irreducible elements are given by binary trees~$T$ with~$\des(T) = 1$ (resp.~with~$\asc(T) = 1$), \ie with a single right (resp.~left) edge.
Such a tree is made by glueing two left (resp.~right) combs along a right (resp.~left) edge, and can thus be encoded by an arc.
The canonical join (resp.~meet) representation of a binary tree~$T$ is a non-crossing arc diagram with one arc for each right (resp.~left) edge of~$T$, which is also known as the non-crossing partition corresponding to~$T$.
Moreover, for a Tamari interval~$S \le T$, an arc~$j$ of the canonical join representation of~$S$ can cross an arc~$m$ of the canonical meet representation of~$T$ only if~$j$ passes from above to below~$m$.
The canonical complex of the Tamari lattice is thus called the semi-crossing complex.
This complex was extensively studied in~\cite{AlbertinPilaud} (note that the canonical complex of the Tamari lattice is just the restriction to down arcs of the canonical complex of the weak order, which was the one actually studied in~\cite{AlbertinPilaud}).
It is illustrated in \cref{fig:canonicalComplex}.
The top left picture shows the Tamari lattice where in each binary tree, the descents are colored red, and the ascents are colored blue.
The middle left picture is the translation on arcs, obtained by flattening each tree to the horizontal line.
The bottom left picture is the semi-crossing complex, thus the canonical complex of the Tamari lattice when~$n = 3$ (note that it has indeed $13$ faces: the empty set, $6$ vertices, and $6$ edges).
The right picture is the semi-crossing complex, thus the canonical complex of the Tamari lattice when~$n = 4$ (note that it has indeed $68$ faces: the empty set, $12$ vertices, $33$ edges, and $22$ triangles).
Note that we only draw the graphs of the canonical complexes, since they are flag simplicial complexes.
\end{example}

We are now ready to observe the connection between the numbers~$a_{n,k}$ of \cref{thm:fVectorCanonicalComplex} and the $f$-vector of the canonical complex of the Tamari lattice.
Recall that the \defn{$f$-vector} of a $d$-dimensional polytopal complex of~$\cC$ is the vector~$(f_0, f_1, \dots, f_d)$ where~$f_i$ denotes the number of $i$-dimensional faces of~$\cC$.

\begin{proposition}
\label{prop:fvectorCanonicalComplex}
The $f$-vector of the canonical complex of the Tamari lattice~$\Tam(n)$ on binary trees with~$n$ nodes is given by \((a_{n,k})_{0 \le k < n}.\)
\end{proposition}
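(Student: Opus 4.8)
The plan is to unwind the definitions so that the statement becomes a tautology, contingent only on \cref{thm:fVectorCanonicalComplex}. Recall from \cref{subsec:canonicalComplex} that the canonical complex of a semidistributive lattice~$L$ has, as its faces, the sets~$J \sqcup M$ where~$x = \bigJoin J$ is the canonical join representation of some~$x \in L$, $y = \bigMeet M$ is the canonical meet representation of some~$y \in L$, and~$x \le y$ in~$L$; moreover this correspondence~$(x \le y) \mapsto J \sqcup M$ is a bijection between the intervals of~$L$ and the faces of the complex, and the dimension of the face~$J \sqcup M$ equals~$|J| + |M| - 1$.

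First I would apply this to~$L = \Tam(n)$: by the \cref{example} preceding the proposition, the canonical join representation of a binary tree~$S$ has exactly~$\des(S)$ arcs (one per right edge of~$S$), and the canonical meet representation of a binary tree~$T$ has exactly~$\asc(T)$ arcs (one per left edge of~$T$). Hence the face associated with a Tamari interval~$S \le T$ has cardinality~$\des(S) + \asc(T)$, and therefore dimension~$\des(S) + \asc(T) - 1$. Since the interval-to-face correspondence is a bijection, the number~$f_{k-1}$ of $(k-1)$-dimensional faces of the canonical complex is exactly the number of Tamari intervals~$S \le T$ in~$\Tam(n)$ with~$\des(S) + \asc(T) = k$, which is~$a_{n,k}$ by \cref{thm:fVectorCanonicalComplex}. (The empty face, of dimension~$-1$, corresponds to the improper "interval" with~$J = M = \varnothing$, i.e.\ to $k = 0$, consistent with~$a_{n,0} = 1$.)

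Finally I would note the range of~$k$: a binary tree with~$n$ nodes has at most~$n-1$ edges, so~$\des(S) + \asc(T)$ can be as large as~$n-1$ but no larger when~$S = T$ is a tree realising the maximum (and the maximum is attained), while $a_{n,k} = 0$ for~$k \ge n$ since~$\binom{n+1}{k+2}$ vanishes there; this matches the index set~$0 \le k < n$ in the statement. Putting these together, the $f$-vector of the canonical complex of~$\Tam(n)$ is~$(f_{-1}, f_0, \dots, f_{n-2}) = (a_{n,0}, a_{n,1}, \dots, a_{n,n-1}) = (a_{n,k})_{0 \le k < n}$, as claimed.

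The main obstacle here is not a calculation but a bookkeeping subtlety: one must be careful that the bijection between intervals and faces of the canonical complex is genuinely a bijection (including the edge case of the empty face), and that the shift between the combinatorial parameter~$k = |J| + |M|$ and the geometric dimension~$k-1$ is tracked consistently. Both points are settled by the general theory of canonical complexes recalled in \cref{subsec:canonicalComplex} (see~\cite{AlbertinPilaud}), so no further work is needed beyond citing it.
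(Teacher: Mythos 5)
Your proof is correct and follows essentially the same route as the paper: the face of the canonical complex attached to an interval $S \le T$ is the disjoint union of the canonical join representation of $S$ and the canonical meet representation of $T$, hence has $\des(S) + \asc(T)$ vertices, and the interval-to-face correspondence is a bijection. The only discrepancy is cosmetic: the paper declares the dimension of that face to be $\des(S)+\asc(T)$ itself (so that $f_k = a_{n,k}$, with the empty face counted as $0$-dimensional), whereas you use the standard simplicial convention (dimension $=$ cardinality $- 1$) and index the $f$-vector from $f_{-1}$; both conventions yield the same vector $(a_{n,k})_{0 \le k < n}$.
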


\begin{proof}
The dimension of the face of the canonical complex of the Tamari lattice corresponding to an interval~$S \le T$ is the number of binary trees covered by~$S$ plus the number of binary trees covering~$T$, which is precisely~$\des(S) + \asc(T)$.
Hence, the number of $k$-dimensional faces of the canonical complex of~$\Tam(n)$ is given by~$a_{n,k}$.
\end{proof}

\subsection{Diagonal of the associahedron}
\label{subsec:diagonalAssociahedron}

The diagonal of a polytope~$P$ is the map~$\delta : P \to P \times P$ defined by~$x \mapsto (x,x)$.
A \defn{cellular approximation} of the diagonal of~$P$ (or just \defn{cellular diagonal} of~$P$ for short) is a map~$\tilde \delta : P \to P \times P$ homotopic to~$\delta$, which agrees with~$\delta$ on the vertices of~$P$, and whose image is a union of faces of~$P \times P$.
For a family of polytopes whose faces are products of polytopes in the family (like simplices, cubes, permutahedra or associahedra among others), some algebraic purposes additionally require the cellular diagonal to be compatible with the face structure.
Finding cellular diagonals in such families of polytopes is a difficult and important challenge at the crossroad of operad theory, homotopical algebra, combinatorics and discrete geometry, see \cite{SaneblidzeUmble-diagonals, MarklShnider, Loday-diagonal, MasudaThomasTonksVallette, LaplanteAnfossi} and the references therein.

Here, we focus on the associahedra.
Algebraic diagonals for the associahedra were found in~\cite{SaneblidzeUmble-diagonals} and later in~\cite{MarklShnider, Loday-diagonal}.
The first topological diagonal for the associahedra, as defined above, was given in~\cite{MasudaThomasTonksVallette} for the realizations of the associahedra of~\cite{Loday,ShniderSternberg}.
It recovers, at the cellular level, all the previous formulas~\cite{SaneblidzeUmble-comparingDiagonals, DelcroixOgerLaplanteAnfossiPilaudStoeckl}.
We simply denote by~$\Delta_d$ the cellular diagonal of the $d$-dimensional associahedron of~\cite{Loday,ShniderSternberg} constructed in~\cite{MasudaThomasTonksVallette}.
The faces of~$\Delta_d$ are given by the following description, called the \defn{magical formula}.

\begin{proposition}[{\cite[Thm.~2]{MasudaThomasTonksVallette}}]
\label{prop:magicalFormula}
The $k$-dimensional faces of the cellular diagonal~$\Delta_d$ correspond to the pairs~$(F,G)$ of faces of the associahedron with
\[
\dim(F) + \dim(G) = k
\qquad\text{and}\qquad
\max(F) \le \min(G)
\]
where~$\le$, $\max$ and~$\min$ refer to the order given by the Tamari lattice.
\end{proposition}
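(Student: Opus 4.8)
The plan is to identify the faces of the associahedron $\Asso_d$ with their combinatorial type and then read off the faces of $\Delta_d$ from the explicit description of the diagonal map constructed in~\cite{MasudaThomasTonksVallette}. First I would recall that the faces of the $d$-dimensional associahedron are indexed by the planar trees with $d+1$ internal leaves (equivalently, by polygon dissections of an $(d+3)$-gon, or by Schröder-type objects), with the vertices corresponding to binary trees and a face $F$ being a product of smaller associahedra; the face poset is ordered by refinement. In particular each face $F$ has a well-defined set of binary trees refining it, and since the vertices of $\Asso_d$ inherit the Tamari order, it makes sense to speak of $\min(F)$ and $\max(F)$ --- one must check (this is standard, going back to the Loday/Tamari picture) that the set of vertices of any face $F$ is an interval $[\min(F),\max(F)]$ of the Tamari lattice, so these extrema exist and are themselves vertices of $F$.

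The heart of the argument is the analysis of the cellular diagonal $\Delta_d$ itself. The map $\tilde\delta$ of~\cite{MasudaThomasTonksVallette} is built so that its image is the union of those product cells $F\times G$ of $\Asso_d\times\Asso_d$ that are "compatible" in the combinatorial sense dictated by the Loday realization, and the key combinatorial lemma proved there (their Theorem~2) is exactly that $F\times G$ lies in the image if and only if $\max(F)\le\min(G)$ in the Tamari order. I would therefore proceed by: (1) quoting the construction of $\tilde\delta$ and the fact that the cells in its image are precisely the maximal-dimensional product cells $F\times G$ with $F, G$ transverse in the appropriate sense; (2) translating transversality into the inequality $\max(F)\le\min(G)$ via the Tamari order on vertices, which is the genuinely combinatorial step; and (3) noting that once the image is a union of cells $F\times G$, a cell $F'\times G'$ is a face of $\Delta_d$ iff it is a face of some such $F\times G$, i.e.\ iff $F'\subseteq F$ and $G'\subseteq G$ --- but since the defining condition $\max(\cdot)\le\min(\cdot)$ is inherited by subfaces (as $\max(F')\le\max(F)\le\min(G)\le\min(G')$ when $F'\subseteq F$ and $G'\subseteq G$), the set of faces of $\Delta_d$ is closed under exactly the pairs satisfying the magical formula. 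Finally the dimension count $\dim(F)+\dim(G)=k$ is immediate from $\dim(F\times G)=\dim F+\dim G$.

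The main obstacle is step (2): making precise the equivalence between the geometric/homotopical condition defining which cells appear in the image of $\tilde\delta$ and the purely order-theoretic condition $\max(F)\le\min(G)$. Since we are explicitly allowed to cite~\cite{MasudaThomasTonksVallette} --- indeed the proposition is stated as \cite[Thm.~2]{MasudaThomasTonksVallette} --- the honest and economical route is to not reprove their theorem but to cite it directly, and use this section only to fix notation (faces of the associahedron, the Tamari order on the vertices of a face, the operators $\min$ and $\max$) so that the statement is meaningful in our setting and so that the subsequent enumeration of the faces of $\Delta_d$ by the numbers $b_{n,k}$ in \cref{subsec:diagonalAssociahedron} is justified. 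In other words, the "proof" here is a translation: we recast the result of~\cite{MasudaThomasTonksVallette} in the language of Tamari intervals, observing that a $k$-dimensional face of $\Delta_{n-1}$ is the same datum as a pair of faces $(F,G)$ of $\Asso_{n-1}$ with $\max(F)\le\min(G)$ and $\dim F+\dim G=k$, hence the same datum as a Tamari interval $S\le T$ together with a choice of face $F$ with $\min(F)=S$ and a face $G$ with $\max(G)=T$, which is precisely what the sum $\sum_{S\le T}\binom{\des(S)+\asc(T)}{k}$ defining $b_{n,k}$ counts.
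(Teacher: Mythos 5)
The paper gives no proof of this proposition at all — it is stated purely as a citation of \cite[Thm.~2]{MasudaThomasTonksVallette} — and your proposal correctly concludes that citing that reference is the honest route, so your approach matches the paper's exactly. (One small slip in your closing remark: since the defining condition is $\max(F)\le\min(G)$, the Tamari interval attached to a pair $(F,G)$ is $S=\max(F)\le\min(G)=T$, so the count behind $b_{n,k}$ chooses $F$ with $\max(F)=S$ and $G$ with $\min(G)=T$, not the reverse as you wrote.)
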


The method of~\cite{MasudaThomasTonksVallette}, fully developed in~\cite{LaplanteAnfossi} relies on the theory of fiber polytopes of~\cite{BilleraSturmfels}.
It enables to see the cellular diagonal of the associahedron as a polytopal complex refining the associahedron, a point of view we shall adopt in our figures for the rest of the paper.

\begin{example}
\begin{figure}
	\centerline{\includegraphics[scale=.5]{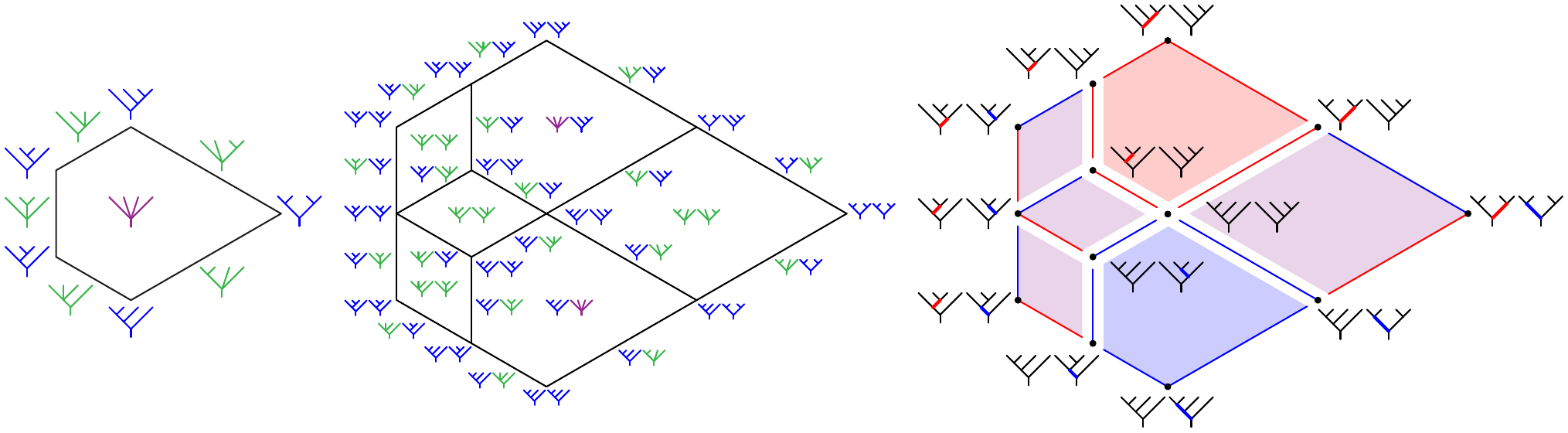}}
	\caption{Left: The $2$-dimensional associahedron with its faces labeled by Schr\"oder trees with $4$ leaves (in particular, its vertices correspond to binary trees). Middle: The cellular diagonal~$\Delta_2$ with its faces labeled by pairs of Schr\"oder trees given by the magical formula (in particular, its vertices correspond to Tamari intervals). Right: The decomposition of the cellular diagonal~$\Delta_2$ obtained by associating each face~$(F,G)$ to the Tamari interval~$\max(F) \le \min(G)$. The $f$-vector is~$(13,18,6)$.}
	\label{fig:diagonalAssociahedron}
\end{figure}
The cellular diagonal~$\Delta_2$ is illustrated in \cref{fig:diagonalAssociahedron}.
The left picture is the $2$-dimensional associahedron, with faces labeled by Schr\"oder trees (the colors depend on the dimension), and in particular with vertices labeled by binary trees.
The middle picture is the cellular diagonal~$\Delta_2$ seen as a polyhedral complex refining the $2$-dimensional associahedron, with faces labeled by pairs~$(F,G)$ of Schr\"oder trees, and in particular with vertices labeled by Tamari intervals.
The right picture is a decomposition of~$\Delta_2$, where each face~$(F,G)$ is associated to the Tamari interval~${\max(F) \le \min(G)}$.
In other words, the Tamari interval associated to a pair~$(F,G)$ of Schr\"oder trees is obtained by replacing each $p$-ary node of~$F$ (resp.~of~$G$) by a right (resp.~left) comb with $p$ leaves.
For each Tamari interval~$S \le T$, we have colored in red (resp.~blue) the edges of~$S$ (resp.~of~$T$) corresponding to descents of~$S$ (resp.~to ascents of~$T$).
\end{example}

We are now ready to observe the connection between the numbers~$b_{n,k}$ of \cref{thm:fVectorDiagonal} and the $f$-vector of the cellular diagonal of the $(n-1)$-dimensional associahedron.

\begin{proposition}
\label{prop:fVectorDiagonal}
The $f$-vector of the cellular diagonal~$\Delta_{n-1}$ of the $(n-1)$-dimensional associahedron is given by \((b_{n,k})_{0 \le k < n}.\)
\end{proposition}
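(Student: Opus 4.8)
The plan is to count the $k$-dimensional faces of $\Delta_{n-1}$ directly via the magical formula of \cref{prop:magicalFormula}, organizing the count according to the Tamari interval that each face of the diagonal determines. Recall that $b_{n,k}$ is \emph{by definition} (\cref{thm:fVectorDiagonal}) the quantity $\sum_{S \le T} \binom{\des(S) + \asc(T)}{k}$, the sum ranging over all intervals $S \le T$ of $\Tam(n)$; so the task reduces to showing that $f_k(\Delta_{n-1})$ equals this sum.

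The first ingredient is the face structure of the $(n-1)$-dimensional associahedron of~\cite{Loday, ShniderSternberg}: its faces are indexed by Schr\"oder trees with $n+1$ leaves (planar rooted trees each of whose internal nodes has at least two children), the face $F$ indexed by a Schr\"oder tree $\mathcal F$ being a product of associahedra, one of dimension $c_v - 2$ for each internal node $v$ of $\mathcal F$ with $c_v$ children, so that $\dim F = \sum_v (c_v - 2)$; and, as recalled in the excerpt, $\max(F)$ (resp.~$\min(F)$) in the Tamari order is the binary tree obtained by expanding each internal node of $\mathcal F$ with $c_v$ children into a right (resp.~left) comb with $c_v$ leaves. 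The combinatorial heart of the argument is then the following identification: for a fixed binary tree $S$ with $n$ nodes, the faces $F$ of the associahedron with $\max(F) = S$ are in bijection with the subsets of the set of right edges of $S$, equivalently with the subsets of the $\des(S)$ descents of $S$, the bijection being edge contraction; moreover under this bijection $\dim F$ equals the number of contracted edges, since a node obtained by contracting $j$ right edges acquires $j+2$ children and thus contributes $j$ to $\sum_v(c_v-2)$. Dually, the faces $G$ with $\min(G) = T$ are in bijection with the subsets of the $\asc(T)$ ascents of $T$, with $\dim G$ the number of contracted left edges.

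It then remains to assemble the count. By \cref{prop:magicalFormula}, a $k$-dimensional face of $\Delta_{n-1}$ is a pair $(F, G)$ of faces of the associahedron with $\dim F + \dim G = k$ and $\max(F) \le \min(G)$ in the Tamari lattice. Writing $S = \max(F)$ and $T = \min(G)$, such a pair determines a Tamari interval $S \le T$ of $\Tam(n)$; conversely, since the condition $\max(F)\le\min(G)$ is automatically satisfied once $S$ and $T$ are fixed, for every such interval the number of pairs $(F,G)$ with $\max(F) = S$, $\min(G) = T$ and $\dim F + \dim G = k$ is, by the face identification above together with the Vandermonde identity,
\[
\sum_{i + j = k} \binom{\des(S)}{i}\binom{\asc(T)}{j} = \binom{\des(S) + \asc(T)}{k}.
\]
Summing over all intervals of $\Tam(n)$ gives $f_k(\Delta_{n-1}) = \sum_{S \le T}\binom{\des(S) + \asc(T)}{k} = b_{n,k}$, and since faces of the $(n-1)$-dimensional complex $\Delta_{n-1}$ have dimensions $0 \le k \le n-1$ this produces exactly the vector $(b_{n,k})_{0 \le k < n}$ (matching, via \cref{thm:fVectorCanonicalComplex} and Vandermonde, the alternative form $b_{n,k} = \sum_\ell a_{n,\ell}\binom{\ell}{k}$).

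The only step that is not pure bookkeeping is the face identification in the third paragraph. The point to be careful about is that an \emph{arbitrary} subset of right edges of $S$ — not necessarily forming connected chains — may be contracted, and still yields a bona fide Schr\"oder tree whose right-comb expansion is $S$; dually for left edges and $\min$. This, together with the additivity of the dimension formula $\dim F = \sum_v(c_v - 2)$ over the contracted edges, is a routine verification best carried out by writing down the right-comb expansion explicitly (for instance on a short chain of right edges), and I expect it to be the main, though modest, obstacle; everything else is the magical formula plus Vandermonde's identity.
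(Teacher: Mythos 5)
Your proposal is correct and follows essentially the same route as the paper: apply the magical formula, count the faces with prescribed $\max$ (resp.\ $\min$) vertex by subsets of descents (resp.\ ascents), and conclude by Vandermonde. The only cosmetic difference is that the paper justifies the key count ``there are $\binom{\des(T)}{\ell}$ $\ell$-faces with maximal vertex $T$'' in one line by invoking simplicity of the associahedron, whereas you verify it explicitly via Schr\"oder-tree edge contraction.
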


\begin{proof}
For each binary tree~$T$, there are precisely~$\binom{\des(T)}{\ell}$ (resp.~$\binom{\asc(T)}{\ell}$) $\ell$-dimensional faces of the associahedron whose maximal (resp.~minimal) vertex is~$T$, because the associahedron is a simple polytope.
We thus directly derive from the magical formula of~\cref{prop:magicalFormula} that the number of $k$-dimensional faces of~$\Delta_{n-1}$ is
\[
\sum_{S \le T} \sum_{0 \le \ell \le k} \binom{\des(S)}{\ell} \binom{\asc(T)}{k-\ell} = \sum_{S \le T} \binom{\des(S) + \asc(T)}{k} = b_{n,k}.
\qedhere
\]
\end{proof}

\begin{remark}
This proof can also be interpreted on \cref{fig:diagonalAssociahedron}.
Namely, by attaching each face~$(F,G)$ to the Tamari interval~$\max(F) \le \min(G)$, we have partitioned the face poset of~$\Delta_{n-1}$ into boolean lattices based at its vertices.
As the boolean lattice attached to a Tamari interval~$S \le T$ has rank~$\des(S) + \asc(T)$, we obtain that the number of $k$-dimensional faces in this part of the face poset is~$\binom{\des(S) + \asc(T)}{k}$.
\end{remark}

%%%%%%%%%%%%%%%%%%%%%%%%%%%%%%%%%%%%%%

\section{Analytic proof}
\label{sec:analyticProof}

We now provide an analytic proof of~\cref{thm:fVectorCanonicalComplex,thm:fVectorDiagonal} following the approach of~\cite{Chapoton1, Chapoton2}.

%%%%%%%%%%%%%%%%%%%%%%%%%%%%%%%%%%%%%%

\subsection{Grafting decompositions}
\label{subsec:graftingDecompositions}

We first obtain a polynomial equation satisfied by the generating function $A(t,z) \eqdef \sum a_{n,k} t^n z^k$, that will be exploited in \cref{subsec:LagrangeInversion,subsec:binomialIdentity} to derive \cref{thm:fVectorCanonicalComplex,thm:fVectorDiagonal}.
Following the approach of~\cite{Chapoton1, Chapoton2}, we use a standard decomposition of Tamari intervals that naturally introduces an additional catalytic variable.

We denote by~$S / S'$ (resp.~by~$S' \backslash S$) the binary tree obtained by grafting the root of~$S$ on the leftmost (resp.~rightmost) leaf of~$S'$.
A grafting decomposition of~$S$ is an expression~${S = S_0 / S_1 / \dots / S_k}$ where~$S_i$ is a binary tree with at least a node.
In other words, a grafting decomposition of~$S$ is obtained by cutting some of the edges of~$S$ along the path from its root to its leftmost leaf.
See \cref{fig:graftingDecompositionTree}.
For a binary tree~$T$, we denote by~$n(T)$ the number of nodes of~$T$ and by~$\ell(T)$ the number of edges along the path from its root to its leftmost leaf (here, we only count edges between two nodes).
To fix the ideas, $n(Y) = 1$ and~$\ell(Y) = 0$ for the unique binary tree~$Y$ with a single node (and thus two leaves).
The following observations were made in~\cite[Sect.~3]{Chapoton1} and~\cite[Sect.~3.1]{Chapoton2}, and are illustrated in \cref{fig:graftingDecompositionInterval}.

\begin{lemma}[\cite{Chapoton1,Chapoton2}]
\label{lem:decomposition}
\phantom{.}
\begin{enumerate}[(i)]
\item Assume that~$S = S_0 / S_1 / \dots / S_k$ and~$T = T_0 / T_1 / \dots / T_k$ are such that~$n(S_i) = n(T_i)$ for all~$i \in [k]$. Then~$S \le T$ if and only if~$S_i \le T_i$ for all~$i \in [k]$.
\item If~$S \le T$, then we can write~$S = S_0 / S_1 / \dots / S_\ell$ and~$T = T_0 / T_1 / \dots / T_\ell$ where~$\ell = \ell(T)$ and $n(S_i) = n(T_i)$ for all~$i \in [\ell]$.
\label{item:decomposition}
\end{enumerate}
\end{lemma}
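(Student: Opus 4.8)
The plan is to reduce everything to the classical coordinatewise description of the Tamari order. Label the nodes of a binary tree~$V$ with~$n$ nodes by $1,\dots,n$ in inorder and let $\BB_i(V)$ denote the largest label occurring in the subtree of~$V$ rooted at node~$i$ (equivalently, $\BB_i(V)=i$ plus the number of nodes in the right subtree of node~$i$), so that $\BB_i(V)\ge i$ always. It is classical that $S\le T$ in~$\Tam(n)$ if and only if $\BB_i(S)\le\BB_i(T)$ for all $i\in[n]$: a right rotation preserves the inorder labelling and raises exactly one coordinate of the vector $\BB(V)=(\BB_1(V),\dots,\BB_n(V))$, and the coordinatewise order coincides with the order generated by these moves. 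I would first record how $\BB$ interacts with grafting: writing $n_i\eqdef n(S_i)$ and $\sigma_i\eqdef n_0+\dots+n_{i-1}$ (so $\sigma_0=0$),
\[
\BB(S_0/S_1/\dots/S_k)=\bigl(\BB(S_0)+\sigma_0,\ \BB(S_1)+\sigma_1,\ \dots,\ \BB(S_k)+\sigma_k\bigr),
\]
where juxtaposition denotes concatenation of vectors and $+\sigma$ adds $\sigma$ to each entry. This is a one-line induction on~$k$: grafting the root of a tree~$S$ onto the leftmost leaf of a tree~$S'$ leaves the subtree of each node of~$S$ unchanged, shifts every inorder label of~$S'$ by $n(S)$, and enlarges the subtree of every node of~$S'$ by exactly the $n(S)$ nodes of~$S$ (which carry the smallest labels).

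Assertion~(i) is then immediate: since $n(S_i)=n(T_i)$, the two decompositions produce the \emph{same} shifts $\sigma_i$, so the concatenation formula gives $\BB(S)\le\BB(T)$ coordinatewise if and only if $\BB(S_i)\le\BB(T_i)$ coordinatewise for every~$i$; reading both sides through the coordinatewise description (in $\Tam(\sum_i n_i)$ on the left, in $\Tam(n_i)$ on the right) yields $S\le T\iff S_i\le T_i$ for all~$i$.

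For assertion~(ii) I would first determine which cuts along the leftmost path of a tree~$V$ are admissible. Cutting the leftmost path of~$V$ at any set of edges yields a grafting decomposition whose piece boundaries sit at nodes whose subtrees hang off the leftmost path; these subtrees are exactly the intervals $[1,c]$ for~$c$ ranging over a distinguished set of \emph{split points} of~$V$, together with $[1,n]$. I claim that, for $0<c<n$, such a~$c$ is a split point of~$V$ if and only if $\BB_i(V)\le c$ for all $i\le c$ (and $0,n$ are always split points). One direction is clear, since every $i\le c$ lies in the subtree~$[1,c]$. For the converse, the ancestors of node~$1$ have subtrees $[1,c_1]\subsetneq[1,c_2]\subsetneq\dots\subsetneq[1,n]$ with $c_1=\BB_1(V)$ and $c_{j+1}=\BB_{c_j+1}(V)$; under the hypothesis this strictly increasing chain starts at $c_1\le c$ and can jump above~$c$ only from a value equal to~$c$, so~$c$ occurs, \ie is a split point. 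Hence a grafting decomposition of~$V$ into $m+1$ pieces amounts to choosing $m$ split points, the piece sizes being the successive differences of $0<c_1<\dots<c_m<n$; in particular the \emph{maximal} decomposition $T=T_0/\dots/T_{\ell(T)}$ is the one obtained from all split points of~$T$. Finally, if $S\le T$ then $\BB_i(S)\le\BB_i(T)$ for all~$i$, so every split point of~$T$ is a split point of~$S$; cutting~$S$ at precisely the split points of~$T$ produces $S=S_0/\dots/S_{\ell(T)}$ with $n(S_j)=n(T_j)$ for all~$j$, which is assertion~(ii) with $\ell=\ell(T)$.

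I expect the only non-formal point to be the split-point characterization in~(ii) — essentially the monotonicity of "being a split point" with respect to the coordinatewise order on bracket vectors; once this is in place, both parts follow mechanically from the concatenation formula. (One can also see the forward implication of~(i) directly, since grafting transports right rotations; but the reverse implication seems to genuinely require a global order description such as the bracket vector, because the set of trees admitting a given decomposition shape is not closed under rotations.)
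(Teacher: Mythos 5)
Your proof is correct. Note first that the paper does not actually prove this lemma: it is stated with an attribution to Chapoton's papers, and the surrounding text only says the observations "were made" there, so any complete argument is already more than the paper supplies. Your route — reducing both assertions to the componentwise description of the Tamari order by the bracket vector $\BB_i$ (largest inorder label in the subtree of node~$i$), proving the shift/concatenation formula for $\BB$ under grafting, and characterizing the admissible cut positions of a tree $V$ as those $c$ with $\BB_i(V)\le c$ for all $i\le c$ — is sound: I checked the concatenation formula, the chain argument for the converse of the split-point characterization (the ancestors of node~$1$ have subtrees $[1,c_1]\subsetneq[1,c_2]\subsetneq\cdots$ with $c_{j+1}=\BB_{c_j+1}(V)$, and the hypothesis forces the chain to pass through~$c$), and the monotonicity of split points along the componentwise order, and all are right; the piece sizes also come out as $n(S_j)=n(T_j)$ as required. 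This is a standard self-contained way to prove such statements, in the same spirit as the global order descriptions used in the cited sources. The one point you should make explicit is that the equivalence "$S\le T$ iff $\BB(S)\le\BB(T)$ componentwise" is the Huang--Tamari theorem: your one-rotation observation only yields the forward implication, and the clause "the coordinatewise order coincides with the order generated by these moves" is precisely the nontrivial converse, which should be cited rather than presented as a consequence of that observation. As you correctly note in your closing parenthesis, the full equivalence is genuinely needed (for the forward direction of~(i) and for~(ii)); with that reference supplied, the proof is complete.
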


\begin{figure}[t]
	\centerline{\includegraphics[scale=1]{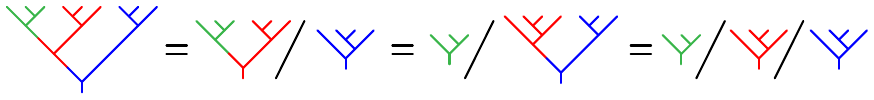}}
	\caption{All grafting decompositions of a binary tree.}
	\label{fig:graftingDecompositionTree}
\end{figure}

\begin{figure}
	\centerline{\includegraphics[scale=1]{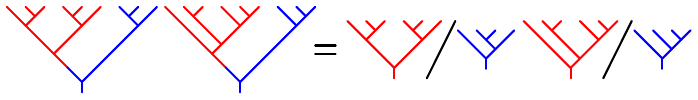}}
	\caption{A grafting decomposition of a Tamari interval.}
	\label{fig:graftingDecompositionInterval}
\end{figure}

Consider now the generating function
\[
\bbA(u,v,t,z) \eqdef \sum_{S \le T} u^{\ell(S)} v^{\ell(T)} t^{n(S)} z^{\des(S) + \asc(T)},
\]
where the sum ranges over all Tamari intervals (with arbitrary many nodes).
To simplify notations, we abbreviate~$A_u \eqdef A_u(t,z) \eqdef \bbA(u,1,t,z)$ and~$\Ac_u \eqdef \Ac_u(t,z) \eqdef \bbA(u,0,t,z)$.
Note that
\[
A_1(t,z) \eqdef \bbA(1,1,t,z) = A(t,z).
\]
Observe also that~$\Ac_u(t,z)$ is the generating function of indecomposable Tamari intervals, \ie of Tamari intervals~$S \le T$ where~$\ell(T) = 0$ so that the decomposition of \cref{lem:decomposition}\,\eqref{item:decomposition} is trivial.
\cref{lem:decomposition} leads to the following functional equation connecting~$A_u$ and~$A_1$.

\begin{proposition}
\label{prop:quadraticEquationA}
The generating functions~$A_u \eqdef \bbA(u,1,t,z)$ and~$A_1 \eqdef \bbA(1,1,t,z)$ satisfy the quadratic functional equation
\[
(u-1) A_u = t \big( u-1 + u (u+z-1) A_u - z A_1 \big) \big( 1 + uz A_u \big).
\]
\end{proposition}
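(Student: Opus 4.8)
The plan is to follow the recursive decomposition of Tamari intervals used by Chapoton~\cite{Chapoton1,Chapoton2}, now additionally recording the statistic $\des(S)+\asc(T)$ through the variable~$z$, and to read the functional equation off the combinatorics. The role of the catalytic variable~$u$ (recording $\ell(S)$) is, as always, to make the decomposition self-referential; the series $A_1=\bbA(1,1,t,z)$, in which $u$ is specialized to~$1$, should appear precisely when the decomposition exposes a sub-interval that is no longer hung along the left branch of~$S$ being tracked, so that its own left-branch length must be forgotten.

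\emph{First layer: the sequence decomposition.} By \cref{lem:decomposition}, a Tamari interval $S\le T$ is the same data as the sequence $\big((S_0\le T_0),\dots,(S_\ell\le T_\ell)\big)$ of its indecomposable pieces, where $\ell=\ell(T)$ and $\ell(T_i)=0$ for all~$i$. Along this sequence $n(S)=\sum_i n(S_i)$, and each of the $\ell$ graftings creates exactly one new edge on the left branch of both~$S$ and~$T$: that edge is an ascent of~$T$ but is not a right edge of~$S$, so $\ell(S)=\sum_i\ell(S_i)+\ell$, $\asc(T)=\sum_i\asc(T_i)+\ell$ and $\des(S)=\sum_i\des(S_i)$. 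Writing~$\Ac_u$ for the generating function of indecomposable Tamari intervals, this yields $A_u=\Ac_u(1+uz\,A_u)$, i.e.\ $\Ac_u=A_u/(1+uz\,A_u)$, a relation that will eliminate~$\Ac_u$ at the very end.

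\emph{Second layer: indecomposable intervals.} An indecomposable interval is an interval $S\le T$ with $T=Y\backslash R$; I would split it according to whether the root of~$S$ is its smallest node, equivalently whether $\ell(S)=0$. If so, then $S=Y\backslash S_R$ and $S\le T$ is equivalent to $S_R\le R$ for an \emph{arbitrary} interval $[S_R,R]$; the new right edge of~$S$ is a descent while the new right edge of~$T$ is not an ascent, and since $\ell(S)=0$ the variable~$u$ is inert, so this case contributes $t(1+z\,A_1)=t+tz\,A_1$, which is the source of the $A_1$-dependence. If $\ell(S)\ge 1$, I would peel off the bottom indecomposable tree-piece $Y\backslash S_R$ of the left branch of~$S$, graft the remainder on its left, and match this against the undivided tree $T=Y\backslash R$ using the right-subtree-containment description of the Tamari order; iterating over the number of peeled pieces produces a geometric progression in~$u$ whose summation contributes $\tfrac{tu\,((u+z-1)A_u-z\,A_1)}{u-1}$. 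Adding the two cases gives $(u-1)\Ac_u=t(u-1)+tu(u+z-1)A_u-tz\,A_1$, and substituting $\Ac_u=A_u/(1+uz\,A_u)$ and clearing the denominator yields the displayed identity.

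The delicate point is the $\ell(S)\ge 1$ case of the second layer: since $T=Y\backslash R$ is a single indecomposable tree, it does \emph{not} split at the node where~$S$ does, so the relation $S\le T$ has to be re-expressed globally (via the left-branch decompositions of \cref{lem:decomposition} and the Tamari order characterisation) rather than locally, and one must keep scrupulously apart the portions where the left branch of~$S$ is still being recorded (powers of~$u$, hence~$A_u$) from those that have been closed off into a complete sub-interval (hence~$A_1$). Once the decomposition is pinned down, matching coefficients of~$u$ and~$z$ against the stated polynomial is routine, and can be cross-checked on small cases, for instance $[t^2]\,A_u=u+uz+z$ accounting for the three intervals of~$\Tam(2)$.
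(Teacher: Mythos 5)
Your overall architecture matches the paper's (and Chapoton's): first the sequence decomposition giving $A_u=\Ac_u(1+uz\,A_u)$, then an equation expressing $\Ac_u$ in terms of $A_u$ and $A_1$, then elimination of $\Ac_u$. The first layer, the final algebra, and the small-case check are all correct, and the expression you write down for the $\ell(S)\ge1$ case, $\tfrac{tu((u+z-1)A_u-zA_1)}{u-1}$, is indeed algebraically equivalent to what the paper's equation for $\Ac_u$ requires. But that expression is asserted rather than derived: you yourself flag that the relation $S\le T$ "has to be re-expressed globally" and that the argument works "once the decomposition is pinned down" --- and pinning it down is precisely the mathematical content of the proposition. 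Peeling pieces off the left branch of $S$ while $T=Y\backslash R$ stays undivided does not produce a geometric progression for free: one must say exactly which pairs (peeled prefix, residual interval) arise from indecomposable intervals and why each arises exactly once, and without that the divided difference $\tfrac{uA_u-A_1}{u-1}$ and the boundary term $uA_u$ have no combinatorial justification. (Your case 1 also quietly uses that $Y\backslash S_R\le Y\backslash R$ implies $S_R\le R$, i.e., that no comparison is created through trees whose root is not node $1$; this is true but deserves a word.)

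The paper closes this gap by running the construction in the opposite direction: from an arbitrary interval $(S,T)$ with $S=S_0/\cdots/S_{\ell(S)}$ it builds the $\ell(S)+2$ indecomposable intervals $(S_k',\,Y\backslash T)$ for $0\le k\le\ell(S)+1$, obtained by inserting a new root node $Y$ at position $k$ along the left branch of $S$. The statement that this is a bijection onto indecomposable intervals with more than one node is exactly the decomposition of \cite{Chapoton1,Chapoton2}, and it simultaneously produces your case 1 (the slice $k=0$, giving $t+tz\,A_1$), the sum $\sum_{k=0}^{\ell(S)}u^kz$ whose closed form yields $z\tfrac{uA_u-A_1}{u-1}$, and the extremal term $u^{\ell(S)+1}$ yielding $uA_u$, after tracking that $\des(S_k')=\des(S)+1$ for $k\le\ell(S)$, $\des(S_{\ell(S)+1}')=\des(S)$, and $\asc(Y\backslash T)=\asc(T)$. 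To complete your proof you would either have to establish this bijection in your top-down phrasing (every indecomposable $S'\le Y\backslash T$ with $\ell(S')=k\ge1$ comes from a unique interval $S\le T$ by such an insertion) or cite it; as written, the central step is a correctly guessed target rather than an argument.
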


\begin{proof}
This statement could be directly deduced by substituting~$x = 1$ and $y = \bar y = z$ in the equation given in~\cite[Prop.~1]{Chapoton2}.
For completeness, we prefer to transpose the proof as a simpler version of the proof of~\cite[Prop.~1]{Chapoton2} is sufficient for our purpose.

By definition, any Tamari interval~$S \le T$ is either indecomposable or can be decomposed as ${S = S' / S''}$ and~$T = T' / T''$ for an indecomposable Tamari interval~$S' \le T'$ and an arbitrary Tamari interval~$S'' \le T''$.
Since~$\ell(S) = \ell(S') + \ell(S'')+1$, $n(S) = n(S') + n(S'')$, $\des(S) = \des(S') + \des(S'')$, and~$\asc(T) = \asc(T') + \asc(T'') + 1$, we obtain
\begin{equation}
\label{eq:C1}
A_u = \Ac_u + u z \Ac_u A_u.
\end{equation}
Now from any Tamari interval~$(S,T)$ where~$S = S_0 / S_1 / \dots / S_{\ell(S)}$, we can construct $\ell(S)+2$ indecomposable Tamari intervals~$(S_k',T')$ for~$0 \le k \le \ell(S)+1$, where
\[
S_k' = \big( S_0 / \dots / S_{k-1} \big) / Y \backslash \big( S_k / \dots / S_{\ell(S)} \big)
\qquad\text{and}\qquad
T' = Y \backslash T
\]
(recall that $Y$ denotes the unique binary tree with a single node).
\begin{figure}[t]
	\centerline{
		\begin{tabular}{c@{\quad}c@{\quad}c@{\quad}c}
			\includegraphics[scale=1]{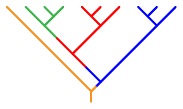} &
			\includegraphics[scale=1]{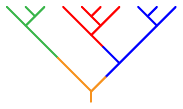} &
			\includegraphics[scale=1]{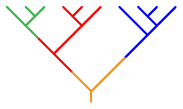} &
			\includegraphics[scale=1]{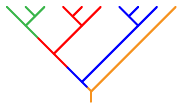} \\
			$S_0' = Y \backslash (S_0 / S_1 / S_2)$ &
			$S_1' = S_0 / Y \backslash (S_1 / S_2)$ &
			$S_2' = (S_0 / S_1) / Y \backslash S_2$ &
			$S_3' = (S_0 / S_1 / S_2) / Y$
		\end{tabular}
	}
	\caption{The binary trees~$S_k'$ for~$0 \le k \le 3$ obtained from the binary tree~$S$ of \cref{fig:graftingDecompositionTree} in the proof of \cref{prop:quadraticEquationA}.}
	\label{fig:proofGraftingDecompositions}
\end{figure}
See \cref{fig:proofGraftingDecompositions}.
For the extreme values of~$k$, we have~$S_0' = Y \backslash S$ and~$S_{\ell(S)}' = S / Y$.
Moreover, any indecomposable Tamari interval~$(S',T')$ with~$n(S')=n(T') > 1$ is obtained in a single way by this procedure.
Since  $\ell(S_k') = k$, ${n(S') = n(S) + 1}$, $\des(S') = \des(S)+1$ when~$k \le \ell(S)$ while ${\des(S_{\ell(S)+1}') = \des(S)}$, and $\asc(T') = \asc(T)$, we obtain
\begin{equation}
\label{eq:C2}
\Ac_u = t \Big( 1 + z \frac{u A_u - A_1}{u-1} + u A_u \Big).
\end{equation}
Combining \cref{eq:C1,eq:C2}, we finally get
\[
\label{eq:C3}
A_u = t \Big( 1 + z \frac{u A_u - A_1}{u-1} + u A_u \Big) \big( 1 + u z A_u \big),
\]
which rewrites as
\[
(u-1) A_u = t \big( u-1 + u (u+z-1) A_u - z A_1 \big) \big( 1 + u z A_u \big).
\qedhere
\]
\end{proof}

We are now ready to derive our functional equation on~$A$ using the quadratic method~\cite{GouldenJackson}.

\begin{proposition}
\label{prop:polynomialEquationA}
The generating function~$A = A(t,z)$ is a root of the polynomial~$P(t,z,X)$ of~$\bQ[t,z,X]$ given by
\begin{gather*}
t^3 z^6 X^4 \\
{} + t^2 z^4 (t z^2 + 6 t z - 3 t + 3) X^3 \\
{} + t z^2 (6 t^2 z^3 + 9 t^2 z^2 - 12 t^2 z + 2 t z^2 + 3 t^2 - 6 t z + 21 t + 3) X^2 \\
{} + (12 t^3 z^4 - 4 t^3 z^3 - 9 t^3 z^2 - 10 t^2 z^3 + 6 t^3 z + 26 t^2 z^2 - t^3 + 6 t^2 z + t z^2 + 3 t^2 - 12 t z - 3 t + 1) X \\
{} + t (8 t^2 z^3 - 12 t^2 z^2 + 6 t^2 z - t z^2 - t^2 + 10 t z + 2 t - 1).
\end{gather*}
\end{proposition}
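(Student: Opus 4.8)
The plan is to eliminate the catalytic variable $u$ from the quadratic functional equation of \cref{prop:quadraticEquationA} via the classical quadratic method of Brown (see \cite{GouldenJackson}). First I would rewrite the equation of \cref{prop:quadraticEquationA} in the form $Q(u,t,z,A_u,A_1)=0$, where $Q$ is a polynomial that is quadratic in $A_u$. Completing the square in $A_u$, one obtains an identity of the shape
\[
\big( c_2(u)\,A_u + c_1(u) \big)^2 = D(u,t,z,A_1),
\]
where the $c_i$ are polynomials in $u,t,z$ (with coefficients involving $t,z$ only through the data of \cref{prop:quadraticEquationA}) and $D$, the discriminant, is a polynomial in $u,t,z,A_1$ of degree $2$ in $A_1$. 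Since $A_u$ is a power series in $t$ for each fixed $u$ near $1$ (indeed $A_u = u t + O(t^2)$), the left-hand side is the square of a power series, so $D$ must have a double root at the unique branch point $u=U(t,z)$ where $c_2(U)A_U+c_1(U)=0$. This yields the two standard equations: $D(U,t,z,A_1)=0$ and $\partial_u D(U,t,z,A_1)=0$, together with the equation $c_2(U)A_U+c_1(U)=0$ relating $A_U$ to $U$.

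Next I would treat $U$, $A_1$, and $A_U$ as three unknowns (power series in $t,z$) constrained by these three polynomial equations, together with the original functional equation evaluated at $u=U$. The goal is to eliminate $U$ and $A_U$ and retain a single polynomial relation for $A_1 = A$. Concretely, I would compute the resultant of $D(U,t,z,X)$ and $\partial_U D(U,t,z,X)$ with respect to $U$; this resultant, as a polynomial in $X$, vanishes at $X=A_1$, and after removing spurious factors (extraneous roots introduced by the resultant, detected by checking the order of the power-series solution) one is left with the quartic $P(t,z,X)$ stated in \cref{prop:polynomialEquationA}. Finally, to confirm that $A$ is the correct root, I would verify that $P(t,z,A(t,z)) = 0$ holds at the level of the initial terms — it suffices to check that $P(t,z,X)$ has a power-series root $X = t + O(t^2)$ with the right low-order coefficients matching $\sum a_{n,k}t^nz^k$ from \cref{table:fVectorCanonicalComplex}, e.g. $a_{1,0}=1$, $a_{2,0}=1$, $a_{2,1}=2$ — and that this root is unique among the four branches, the other three not being power series in $t$.

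The main obstacle is purely computational bookkeeping: the discriminant $D$ and especially the resultant in $U$ are large polynomials in $t,z$, and isolating the genuine quartic factor $P$ from the extraneous factors produced by the elimination requires care (the resultant will typically carry extra factors such as powers of $U$'s leading coefficient or of $c_2$, and possibly a spurious polynomial factor in $X$). This is exactly the kind of calculation that is best delegated to a computer-algebra system, and indeed the companion Maple worksheet at \url{https://mathexp.eu/chyzak/tamari/} carries it out; there is no conceptual difficulty beyond the standard quadratic-method template of \cite{GouldenJackson}. One should simply double-check that the final $P$ is irreducible over $\bQ(t,z)$ (or at least that $A$ is a root of the displayed polynomial and not of a proper factor) so that $P$ is the minimal polynomial, which is what subsequent sections rely on.
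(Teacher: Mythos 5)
Your proposal is correct and follows essentially the same route as the paper: the paper likewise writes the equation of \cref{prop:quadraticEquationA} as $\alpha A_u^2+\beta A_u+\gamma=0$, forms the discriminant $\Delta=\beta^2-4\alpha\gamma$ (degree $4$ in $u$, degree $2$ in $A_1$), imposes that $\Delta$ has a multiple root in $u$ so that its discriminant in $u$ vanishes, and removes the clearly non-vanishing factors to obtain $P$. Your extra remarks (resultant of $D$ and $\partial_u D$, checking initial coefficients to identify the correct power-series branch) are just the explicit form of the same computation.
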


\begin{proof}
We simply apply the quadratic method~\cite{GouldenJackson}.
The quadratic equation of \cref{prop:quadraticEquationA} can be rewritten as
\(
\alpha A_u^2 + \beta A_u + \gamma = 0,
\)
where
\[
\alpha = t u^2 z (u+z-1),
\quad
\beta = t u (u+z-1) + t u z (u-1) - t u z^2 A_1 - u + 1,
\quad
\gamma = t (u-1) - t z A_1.
\]
The discriminant~$\Delta \eqdef \beta^2 - 4 \alpha \gamma$
must have multiple roots, which implies that its own discriminant in~$u$ vanishes.
Removing clearly non-vanishing factors, this leads to the equation of the statement.
Note that~$\Delta$ having only degree~$4$ in~$u$, the formula for the discriminant could be worked out by hand.
\end{proof}

\begin{remark}
\label{rem:z=0}
When specialized at~$z = 0$, \cref{prop:polynomialEquationA} shows that~$A(t,0)$ is a root of the polynomial
\[
P(t,0,X) = - (t - 1)^3 X - t (t - 1)^2
\]
which recovers the fact that~$A(t,0) = t / (1 - t) = t + t^2 + t^3 + \cdots$.
\end{remark}

\begin{remark}
\label{rem:z=1}
When specialized at~$z = 1$,  \cref{prop:polynomialEquationA} shows that $A(t,1)$ is a root of the polynomial
\[
P(t,1,X) = t^3 X^4 + t^2 (4 t + 3) X^3 + t (6 t^2 + 17 t+ 3) X^2 + (4 t^3 + 25 t^2 - 14 t + 1) X + t^3 + 11 t^2 - t.
\]
This is the classical functional equation for the generating function of Tamari intervals (see \eg~\cite[Eq.~(5)]{Chapoton1}).
The curve defined by $P(t,1,X)$ has genus zero and admits the rational parametrization
\begin{equation}\label{eq:para1}
 t = \frac{s}{(s+1)^4}, \quad X = s - s^2 - s^3.
\end{equation}
As a consequence, the unique root $A = A(t,1)= t +3 t^{2}+13 t^{3}+68 t^{4}+399 t^{5}+2530 t^{6}+\cdots$ in~$\bQ[[t]]$ of the
polynomial $P(t,1,X)$ can be written as
\begin{equation}\label{eq:AofS1}
A = S - S^2 - S^3,
\end{equation}
where
\(
S = t +4 t^{2}+22 t^{3}+140 t^{4}+ \cdots
\)
is the unique solution in $\bQ[[t]]$ of
\[\label{eq:Soft1}
t = \frac{S}{(S + 1)^4}.
\]
From this equation, the coefficients of~$S$, $S^2$ and~$S^3$ can be computed via Lagrange inversion.
More precisely, for $r \ge 1$, Lagrange inversion gives
\[
[t^n] S^r = \frac{1}{n} [s^{n-1}] \, r  s^{r-1} \phi(s)^n = \frac{r}{n} [s^{n-r}] \phi(s)^n,
\]
where~$\phi(s) \eqdef (s+1)^4$. Since
\[
[s^a] \phi(s)^n = [s^a] (s+1)^{4n} = \binom{4n}{a},
\]
we obtain that, for~$r \in \{1,2,3\}$,
\[
[t^n] S^r = \frac{r}{n} [s^{n-r}] \phi(s)^n = \frac{r}{n} \binom{4n}{n-r}.
\]
Hence, \cref{eq:AofS1} implies that
\[
[t^n] A = [t^n] S - [t^n] S^2 - [t^n] S^3
\]
is given by
\[
\frac{1}{n} \left( \binom{4n}{n-1} - 2\binom{4n}{n-2} - 3\binom{4n}{n-3}  \right) = \frac{2}{(3n+1)(3n+2)} \binom{4n+1}{n+1},
\]
as proved in~\cite[Thm.~2.1]{Chapoton1}.
\end{remark}

%%%%%%%%%%%%%%%%%%%%%%%%%%%%%%%%%%%%%%

\subsection{\cref{thm:fVectorCanonicalComplex} by Lagrange inversion}
%\subsection{From the functional equation to the product formula: Lagrange inversion}
\label{subsec:LagrangeInversion}

We will now mimic the approach in \cref{rem:z=1}, and
extract the coefficients of~$A(t,z)$ to obtain \cref{thm:fVectorCanonicalComplex}.
The starting point is that the curve in $t, X$ defined by the
polynomial $P(t,z,X) \in \bQ(z)[t,X]$ from
\cref{prop:polynomialEquationA} still has genus zero and
admits the following rational parametrization:
\begin{equation}\label{eq:para}
 t = \frac{s}{(s+1) (sz+1)^3}, \quad X = s - z s^2 - z s^3,
\end{equation}
which lifts the parametrization~\eqref{eq:para1}.
As a consequence, the unique root $A$ in $\bQ[[t,z]]$ of the
polynomial $P(t,z,X)$ can be written
\begin{equation}\label{eq:AofS}
A = S - z S^2 - z S^3,
\end{equation}
where
\(
S= t +\left(3 z +1\right) t^{2}+\left(12 z^{2}+9 z +1\right) t^{3}+\cdots \)
is the unique solution in $\bQ[z][[t]]$ of
\begin{equation}\label{eq:Soft}
t = \frac{S}{(S+1) (Sz+1)^3}.
\end{equation}

There exist infinitely many rational parametrizations of $P$,
but the one in~\cref{eq:para} has a double advantage:
on the one hand, \cref{eq:Soft} is under a form amenable to Lagrange
inversion, and therefore allows to express the coefficient of $z^k
t^n$ in $S$ and in its powers;
on the other hand, the simple form of \cref{eq:AofS} allows to
easily extract the coefficient of $t^n z^k$ in $A$ as a sum of
similar coefficients of $S$, $S^2$ and $S^3$.
Putting together~\cref{eq:AofS,eq:Soft} enables us to express
the coefficient of $t^n z^k$ in $A$ as a binomial sum.
Let us give a few more details.

For $r \ge 1$ Lagrange inversion gives
\[
[t^n z^k] S^r = \frac{1}{n} [s^{n-1} z^k] r  s^{r-1} \phi(s)^n = \frac{r}{n} [s^{n-r} z^k] \phi(s)^n,
\]
where~$\phi(s) \eqdef (s+1) (sz+1)^3$.
We have that
\[
[s^a] \phi(s)^n = [s^a] (s+1)^n  (sz+1)^{3n} = \sum_{i+j=a} \binom{n}{i} \binom{3n}{j} z^j,
\]
and therefore
\[
[s^a z^k] \phi(s)^n =  \binom{n}{a-k}  \binom{3n}{k}.
\]
It follows that, for~$r \in \{1,2,3\}$,
\[
[t^n z^k] S^r = \frac{r}{n} [s^{n-r} z^k] \phi(s)^n = \frac{r}{n} \binom{n}{n-r-k}  \binom{3n}{k} = \frac{r}{n} \binom{n}{k+r}  \binom{3n}{k},
\]
Hence, \cref{eq:AofS} implies that
\[
a_{n,k} = [t^n z^k] A = [t^n z^k] S - [t^n z^{k-1}] S^2 - [t^n z^{k-1}] S^3
\]
is given by
\[
\frac{1}{n} \left( \binom{n}{k+1} \binom{3n}{k} - 2 \binom{n}{k+1} \binom{3n}{k-1} - 3 \binom{n}{k+2} \binom{3n}{k-1} \right) = \frac{2}{n(n+1)}  \binom{3n}{k}  \binom{n+1}{k+2},
\]
which proves \cref{thm:fVectorCanonicalComplex}.

\subsection{\cref{thm:fVectorDiagonal} by a binomial identity}
\label{subsec:binomialIdentity}

We now simply derive~\cref{thm:fVectorDiagonal} from \cref{thm:fVectorCanonicalComplex}, which amounts to checking the following binomial identity.

\begin{proposition}
\label{prop:binomialIdentity}
For any~$n,k \in \N$,
\[
\sum_{\ell = k}^{n-1} \frac{2}{n(n+1)} \binom{n+1}{\ell+2} \binom{3n}{\ell} \binom{\ell}{k} = \frac{2}{(3n+1)(3n+2)} \binom{n-1}{k} \binom{4n+1-k}{n+1}.
\]
\end{proposition}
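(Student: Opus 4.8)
The plan is to prove \cref{prop:binomialIdentity} by a standard generating-function manipulation, converting the left-hand sum into a coefficient extraction and then recognizing the result. First I would fix $n$ and introduce the polynomial $A_n(z) \eqdef \sum_{\ell} a_{n,\ell} z^\ell$, so that by \cref{thm:fVectorCanonicalComplex} the left-hand side of the identity equals $\sum_{\ell \ge k} a_{n,\ell} \binom{\ell}{k} = [y^k]\, A_n(1+y)$ after the substitution $z \mapsto 1+y$ and extraction of the coefficient of $y^k$ (using $\binom{\ell}{k} = [y^k](1+y)^\ell$). Equivalently, $b_{n,k} = [y^k]\, A_n(1+y)$, so the claim is a closed form for the coefficients of the shifted polynomial $A_n(1+y)$.

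Next I would produce a Lagrange-inversion formula for $A_n(1+y)$ directly from \cref{eq:AofS,eq:Soft}, exactly mimicking \cref{subsec:LagrangeInversion} but with $z = 1+y$. Setting $\tilde\phi(s) \eqdef (s+1)\big(s(1+y)+1\big)^3$, Lagrange inversion gives, for $r \in \{1,2,3\}$,
\[
[t^n]\, S^r\big|_{z = 1+y} = \frac{r}{n} [s^{n-r}]\, \tilde\phi(s)^n = \frac{r}{n} [s^{n-r}]\, (s+1)^n \big(s(1+y)+1\big)^{3n}.
\]
Since $\big(s(1+y)+1\big)^{3n} = \big((s+1) + sy\big)^{3n}$, expanding in $sy$ and then in powers of $s+1$ yields $[s^{n-r}] (s+1)^n \big((s+1)+sy\big)^{3n}$, whose coefficient of $y^k$ can be written as a single binomial in $n$, $r$, $k$; combining the three values $r=1,2,3$ via \cref{eq:AofS} then collapses (as in the $y=0$ case) to the expression $\frac{2}{(3n+1)(3n+2)} \binom{n-1}{k}\binom{4n+1-k}{n+1}$. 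The recombination of the three hypergeometric terms into one is exactly the algebraic simplification already carried out in \cref{subsec:LagrangeInversion} for $k$ replaced by the pair $(k, k-1)$, so the computation is structurally identical; only the bookkeeping of the extra variable $y$ changes.

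The main obstacle I expect is the bookkeeping in the double expansion $\big((s+1)+sy\big)^{3n} = \sum_j \binom{3n}{j} (s+1)^{3n-j} s^j y^j$ followed by writing $s^j = \big((s+1)-1\big)^j$ and extracting $[s^{n-r}]$ — this produces an inner alternating binomial sum that must be evaluated in closed form (a Vandermonde-type collapse) before the three $r$-terms can be combined. A cleaner alternative, which I would present if the direct route gets unwieldy, is to avoid Lagrange inversion on the shifted variable altogether: instead substitute $z = 1+y$ into the rational parametrization \eqref{eq:para} and reparametrize so that the new "$t$" is again of the Lagrange-amenable form $\sigma / \psi(\sigma)$ for a polynomial $\psi$; concretely, one checks that with $\sigma$ defined by $S(1+y) = \sigma$ the relation \eqref{eq:Soft} reads $t = \sigma / \big((\sigma+1)(\sigma(1+y)+1)^3\big)$, and it may be more efficient to instead change variables to $\tau$ with $\tau = s(1+y)$ or similar so that the power appearing is a single clean factor. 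Either way the identity is routine once set up; since the paper also offers creative-telescoping (\cref{subsec:creativeTelescoping3}) and binomial-sums proofs, the purpose here is to give the short self-contained Lagrange-inversion argument, so I would keep the presentation terse, state the Lagrange formula for $[t^n z^k] A$ as established in \cref{subsec:LagrangeInversion}, observe that $b_{n,k} = \sum_\ell a_{n,\ell}\binom{\ell}{k}$ is the binomial transform, and then verify the closed form by the expansion above, relegating the one-line alternating-sum evaluation to a footnote or to the companion worksheet.
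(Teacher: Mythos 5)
Your proposal is correct but takes a genuinely different route from the paper. The paper proves \cref{prop:binomialIdentity} as the special case $r=3n$ of \cref{prop:generalizedBinomialIdentity}, whose proof is two lines of pure binomial algebra: the trinomial revision $\binom{r}{\ell}\binom{\ell}{k}=\binom{r}{k}\binom{r-k}{r-\ell}$ pulls the $k$-dependence out of the sum, and what remains is a single instance of Chu--Vandermonde. This is shorter, needs none of the generating-function machinery, and yields a generalization in which $3n$ is replaced by an arbitrary parameter $r$ decoupled from $n$. Your route instead recognizes the left-hand side as $[y^k]A_n(1+y)=b_{n,k}$ and recomputes $b_{n,k}$ in closed form by Lagrange inversion on \cref{eq:Soft} at $z=1+y$; this amounts to an independent direct proof of \cref{thm:fVectorDiagonal} (so the identity loses its role as a bridge between the two theorems, but is still validly established). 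Two remarks on your execution. First, the obstacle you anticipate is not there: from $\bigl(s(1+y)+1\bigr)^{3n}=\bigl((s+1)+sy\bigr)^{3n}$ the coefficient of $y^k$ is simply $\binom{3n}{k}(s+1)^{3n-k}s^k$, and extracting $[s^{n-r}]$ from $(s+1)^{4n-k}s^k$ gives $\binom{4n-k}{n-r-k}$ directly --- no rewriting of $s^j$ as $\bigl((s+1)-1\bigr)^j$ and no alternating Vandermonde collapse is needed. Second, since $A(t,1+y)=S-(1+y)S^2-(1+y)S^3$, the final recombination involves five terms (coefficients of $y^k$ and $y^{k-1}$ of $S^2$ and $S^3$, plus $[y^k]S$) rather than three; each is a rational multiple of $\binom{n-1}{k}\binom{4n+1-k}{n+1}$ and they do combine to the stated right-hand side, but this is the step to write out carefully, and it is exactly the work the paper's Vandermonde argument avoids.
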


We shall actually prove the following generalization.

\begin{proposition}
\label{prop:generalizedBinomialIdentity}
For any~$n,k,r \in \N$,
\[
\sum_{\ell =k}^{n-1}  \binom{n+1}{\ell+2} \binom{r}{\ell} \binom{\ell}{k} = \frac{n(n+1)}{(r+1)(r+2)} \binom{n-1}{k} \binom{r+n+1-k}{n+1}.
\]
\end{proposition}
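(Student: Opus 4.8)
The plan is to evaluate the left-hand side in closed form by collapsing it to a single Vandermonde convolution, and then to reconcile the outcome with the right-hand side by an elementary factorial computation. Before doing so I would dispose of the degenerate ranges: we may assume $0 \le k \le \min(n-1,r)$, since if $k > n-1$ the sum on the left is empty while $\binom{n-1}{k} = 0$ on the right, and if $n-1 \ge k > r$ every term $\binom{r}{\ell}$ with $\ell \ge k$ vanishes on the left while $\binom{r+n+1-k}{n+1} = 0$ on the right because $0 \le r+n+1-k < n+1$; in both cases the asserted identity reads $0 = 0$.

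Under this assumption, I would first apply the subset-of-a-subset identity $\binom{r}{\ell}\binom{\ell}{k} = \binom{r}{k}\binom{r-k}{\ell-k}$ to factor $\binom{r}{k}$ out of the sum, and reindex by $j = \ell - k$, obtaining
\[
\sum_{\ell=k}^{n-1}\binom{n+1}{\ell+2}\binom{r}{\ell}\binom{\ell}{k} = \binom{r}{k}\sum_{j=0}^{n-1-k}\binom{n+1}{j+k+2}\binom{r-k}{j}.
\]
Writing $\binom{n+1}{j+k+2} = \binom{n+1}{(n-1-k)-j}$ and setting $m = n-1-k$, the inner sum becomes $\sum_{j=0}^{m}\binom{n+1}{m-j}\binom{r-k}{j}$. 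The point to check carefully here is that the truncation at $\ell = n-1$, i.e.\ at $j = m$, is harmless: since $m \le n-1 < n+1$, the factor $\binom{n+1}{m-j}$ already vanishes for every $j > m$, so the truncated sum coincides with the complete Vandermonde convolution, yielding $\sum_{j=0}^{m}\binom{n+1}{m-j}\binom{r-k}{j} = \binom{n+1+r-k}{m} = \binom{r+n+1-k}{n-1-k}$.

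It then only remains to verify the algebraic identity
\[
\binom{r}{k}\binom{r+n+1-k}{n-1-k} = \frac{n(n+1)}{(r+1)(r+2)}\binom{n-1}{k}\binom{r+n+1-k}{n+1}.
\]
I would set $N = r+n+1-k$, note that under our hypotheses $N-(n+1) = r-k \ge 0$ and $N-(n-1-k) = r+2 \ge 0$ so that both binomials on $N$ are honest, and expand everything into factorials; the $(r-k)!$ cancel, $\tfrac{n(n+1)(n-1)!}{(n+1)!} = 1$, and $\tfrac{r!}{(r+2)!} = \tfrac{1}{(r+1)(r+2)}$, after which both sides equal $\tfrac{N!}{(r+1)(r+2)\,k!\,(r-k)!\,(n-1-k)!}$. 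This proves \cref{prop:generalizedBinomialIdentity}; specializing $r = 3n$ and multiplying by $\tfrac{2}{n(n+1)}$ gives \cref{prop:binomialIdentity}, and hence \cref{thm:fVectorDiagonal} in view of \cref{thm:fVectorCanonicalComplex}. I do not expect a genuine obstacle: the only delicate points are the bookkeeping of the summation range, so as to recognise a full rather than a partial Vandermonde sum, and the handling of the degenerate ranges of $k$ and $r$.
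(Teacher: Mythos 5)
Your proof is correct and follows essentially the same route as the paper's: both factor out $\binom{r}{k}$ via the subset-of-a-subset identity and then collapse the remaining sum by Chu--Vandermonde (your $\binom{r-k}{\ell-k}$ is the paper's $\binom{r-k}{r-\ell}$, and your $\binom{r+n+1-k}{n-1-k}$ is the paper's $\binom{r+n+1-k}{r+2}$). Your write-up is merely more explicit about the summation bounds and the degenerate ranges of $k$ and $r$, which the paper leaves implicit.
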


\begin{proof}
Using the identity
\[
\binom{r}{\ell} \binom{\ell}{k} = \binom{r}{k} \binom{r - k}{r - \ell}
\]
this amounts to showing that
\[
 \binom{r}{k} \sum_{\ell \geq 0}  \binom{n+1}{\ell+2} \binom{r - k}{r - \ell} = \frac{n(n+1)}{(r+1)(r+2)} \binom{n-1}{k} \binom{r+n+1-k}{n+1}.
\]
This is in turn equivalent to
\[
 \sum_{\ell \geq 0}  \binom{n+1}{\ell+2} \binom{r - k}{r - \ell}
 =
  \binom{r + n + 1 - k}{r+2},
\]
which is a particular case of the classical Chu--Vandermonde identity.
\end{proof}

%%%%%%%%%%%%%%%%%%%%%%%%%%%%%%%%%%%%%%

\section{Bijections}
\label{sec:bijections}

In this section, we present some bijective considerations on \cref{thm:fVectorCanonicalComplex,thm:fVectorDiagonal}.
We first describe some statistics equivalent to $\des(S)$ and~$\asc(T)$ (\cref{subsec:equivalentStatistics}), expressed in terms of canopy agreements in binary trees (\cref{subsubsec:canopy}), of valleys and double falls in Dyck paths (\cref{subsubsec:DyckPaths}), and of internal degree of Schnyder woods in planar triangulations (\cref{subsubsec:triangulations}).
We then use bijective results of~\cite{FusyHumbert} to provide a more bijective proof of \cref{thm:fVectorCanonicalComplex} (\cref{subsec:triangulations}).

\subsection{Equivalent statistics}
\label{subsec:equivalentStatistics}

Transporting the ascent and descent statistics, we can interpret the formulas of \cref{thm:fVectorCanonicalComplex,thm:fVectorDiagonal} on other combinatorial families encoding Tamari intervals.
Here, we provide three alternative interpretations which seem particularly relevant to us.

\subsubsection{Canopy agreements}
\label{subsubsec:canopy}

Recall that the \defn{canopy} of a binary tree~$T$ with~$n$ nodes is the vector~$\can(T)$ of~$\{-,+\}^{n-1}$ whose $j$th coordinate is~$-$ if and only if the following equivalent conditions are satisfied:
\begin{enumerate}[(i)]
\item the $(j+1)$st leaf of~$T$ is a right leaf,
\item there is an oriented path joining its $j$th node to its $(j+1)$st node, \label{item:path}
\item the $j$th node of $T$ has an empty right subtree,
\item the $(j+1)$st node of~$T$ has a non-empty left subtree, \label{item:subtree}
\item the cone corresponding to~$T$ is located in the halfspace~$x_j \le x_{j+1}$.
\end{enumerate}
(In all these conditions, recall that~$T$ is labeled in inorder and oriented towards its root.)
We need the following three immediate observations, illustrated in \cref{fig:ascentsDescentsCanopyDyckPathsSchnyderWoods,fig:diagonalAssociahedronCanopyDyckPathsSchnyderWoods}.

\begin{figure}[b]
	\centerline{\includegraphics[scale=1.2]{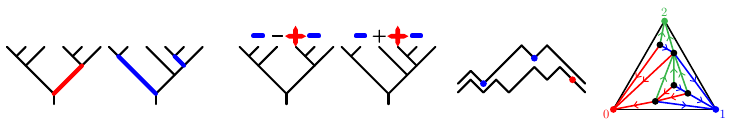}}
	\caption{Connections between equivalent statistics. The descents of~$S$ (resp.~ascents of~$T$) on the left correspond to the positions where the canopies of~$S$ and~$T$ are both positive (resp.~negative) in the middle left, to the double falls of~$\pi(S)$ (resp.~the valleys of~$\pi(T)$) in the middle right, and to the intermediate nodes of the tree~$T_0$ (resp.~$T_1$) on the right.}
	\label{fig:ascentsDescentsCanopyDyckPathsSchnyderWoods}
\end{figure}

\begin{lemma}
\label{lem:canopy}
For any binary trees~$S$ and~$T$,
\begin{enumerate}[(i)]
\item the number of $-$ (resp.~$+$) entries in the canopy of~$T$ is given by~$\asc(T)$ (resp.~by~$\des(T)$),
\item if~$S \le T$ in Tamari order, then the canopy of~$S$ is componentwise smaller than the canopy of~$T$ for the natural order~$- \le +$,
\item if~$S \le T$, then the number of positions where the entries of the canopies of both~$S$ and~$T$ are~$-$ (resp.~$+$) is given by~$\asc(T)$ (resp.~by~$\des(S)$).
\end{enumerate}
\end{lemma}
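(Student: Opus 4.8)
The plan is to establish the three items in turn, using only the equivalent characterizations of the canopy recalled just above the statement. For item~(i), I would argue by a direct count: the $j$th entry of~$\can(T)$ is~$-$ precisely when the $j$th node of~$T$ has an empty right subtree, so the number of~$+$ entries equals the number of nodes among the first~$n-1$ in inorder having a non-empty right subtree; since the last node of~$T$ always has an empty right subtree, this coincides with the total number of nodes of~$T$ with a non-empty right subtree, hence with the number of right edges of~$T$, which is exactly~$\des(T)$. Dually, reading a~$-$ entry through the non-emptiness of the left subtree of the $(j+1)$st node, and using that the first node always has an empty left subtree, the number of~$-$ entries is the number of left edges of~$T$, which is~$\asc(T)$.

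For item~(ii), since the componentwise order on~$\{-,+\}^{n-1}$ is transitive and Tamari order is the transitive closure of the right-rotation relation, it suffices to check the claim for a single cover relation~$S \le T$. A right rotation replaces a local pattern in which a node~$y$ is the left child of a node~$x$ by the pattern in which~$x$ becomes the right child of~$y$; writing~$A$, $B$, $C$ for, respectively, the left subtree of~$y$, the right subtree of~$y$, and the right subtree of~$x$ before the rotation, one checks that the inorder labelling is unchanged, that the internal structure of~$A$, $B$, $C$ is untouched, and that the only node whose ``has a non-empty right subtree'' status can change is~$y$, whose right subtree passes from~$B$ to~$\{B,x,C\}$ and hence from possibly empty to certainly non-empty. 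Consequently at most one entry of the canopy changes along the rotation, and it can only change from~$-$ to~$+$, so the canopy weakly increases along every cover relation, which proves~(ii). Alternatively, one may invoke that~$\can$ is a surjective lattice morphism from the Tamari lattice onto the Boolean lattice on~$\{-,+\}^{n-1}$, but the rotation argument is self-contained.

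Item~(iii) then follows by combining~(i) and~(ii). By~(ii) the pair~$(\can(S)_j,\can(T)_j)$ is never equal to~$(+,-)$, so a position~$j$ satisfies~$\can(T)_j=-$ if and only if both canopies are~$-$ at~$j$, and~$\can(S)_j=+$ if and only if both are~$+$ at~$j$; counting these two families of positions with item~(i) gives~$\asc(T)$ and~$\des(S)$ respectively. The only step carrying genuine content is the rotation analysis in~(ii); the mild subtlety there is to keep track of inorder positions when the middle subtree~$B$ is empty versus non-empty, but in both cases the conclusion (at most one canopy entry changes, and only from~$-$ to~$+$) holds, and everything else is bookkeeping with the five equivalent conditions.
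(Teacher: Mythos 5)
Your proposal is correct and follows essentially the same route as the paper: item (i) by directly counting canopy entries via the subtree/edge characterizations, item (ii) by reducing to a cover relation and checking that a single rotation changes at most one canopy entry and only from $-$ to $+$, and item (iii) by combining (i) and (ii) exactly as the paper does. The only cosmetic difference is that you spell out the rotation bookkeeping (and the empty-versus-nonempty middle subtree case) in more detail than the paper's one-line version.
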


\begin{proof}
\begin{enumerate}[(i)]
\item By the characterization~\eqref{item:subtree} of the canopy above, $\can(T)_j = {-}$ if and only if there is an edge~$i \to j+1$ for some~$i \le j$, which thus defines an ascent of~$T$. Hence, the number of~$-$ entries in~$\can(T)$ is~$\asc(T)$. By symmetry, the number of $+$ entries in~$\can(T)$ is~$\des(T)$
\item It is sufficient to prove (ii) for a cover relation in the Tamari order. If the edge $i \to j$ with~$i < j$ is rotated, then the canopy is unchanged, except maybe its $i$th entry, which changes from~$-$ to~$+$ when~$j = i+1$. An alternative global argument is to observe that if~$S \le T$, then any linear extension of~$S$ is smaller than any linear extension of~$T$, so that there cannot be both oriented paths from~$i+1$ to $i$ in~$S$ and from $i$ to~$i+1$ in~$T$, and to use the characterization~\eqref{item:path} of the canopy above.
\item We have $\can(S)_j = \can(T)_j = {-}$ if and only if $\can(T)_j = {-}$ (by (ii)), so that the number of such positions is~$\asc(T)$ by~(i). By symmetry, the number of positions~$j$ with $\can(S)_j = \can(T)_j = {+}$ is~$\des(S)$.
\qedhere
\end{enumerate}
\end{proof}

\begin{figure}
	\centerline{\includegraphics[scale=.5]{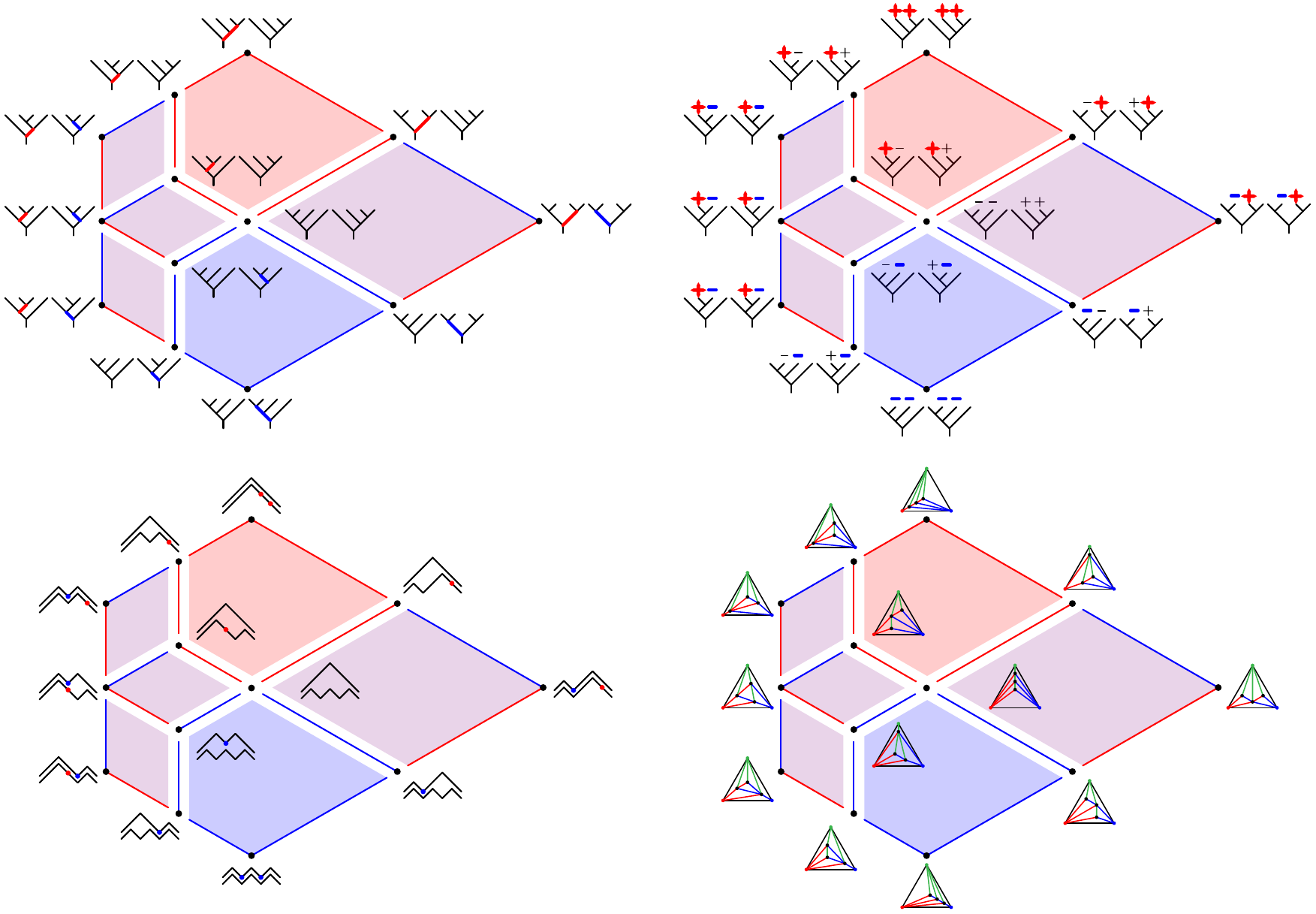}}
	\caption{The decomposition of the cellular diagonal~$\Delta_2$ of \cref{fig:diagonalAssociahedron}, labeled using the equivalent statistics of \cref{fig:ascentsDescentsCanopyDyckPathsSchnyderWoods}.}
	\label{fig:diagonalAssociahedronCanopyDyckPathsSchnyderWoods}
\end{figure}

Using \cref{lem:canopy}, we can transpose \cref{thm:fVectorCanonicalComplex,thm:fVectorDiagonal} in terms of canopy.
We denote by~$\agree(S,T)$ the number of \defn{canopy agreements} between two binary trees~$S$ and~$T$ (\ie of positions where the entries of the canopies of~$S$ and~$T$ agree).

\begin{corollary}
\label{coro:canopy1}
For any~$n,k \in \N$, we have
\[
|\set{S \le T}{\agree(S,T) = k}| = |\set{S \le T}{\des(S) + \asc(T) = k}| = \frac{2}{n(n+1)} \binom{n+1}{k+2} \binom{3n}{k},
\]
where~$S \le T$ are intervals of the Tamari lattice~$\Tam(n)$ on binary trees with~$n$ nodes.
\end{corollary}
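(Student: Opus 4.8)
The plan is to show that the two sets being compared are in fact \emph{identical}, not merely equinumerous, after which the formula is immediate from \cref{thm:fVectorCanonicalComplex}. The key point is that for a Tamari interval $S \le T$, the quantity $\agree(S,T)$ equals $\des(S) + \asc(T)$ on the nose, so there is nothing probabilistic or enumerative left to do once this identity is in hand.

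First I would invoke \cref{lem:canopy}\,(ii): since $S \le T$, the canopy of~$S$ is componentwise~$\le$ the canopy of~$T$ for the order $- \le +$. Consequently there is no position~$j$ with $\can(S)_j = {+}$ and $\can(T)_j = {-}$, so every position at which the canopies of~$S$ and~$T$ agree is a position where they are either both~$-$ or both~$+$. This gives
\[
\agree(S,T) = |\set{j}{\can(S)_j = \can(T)_j = {-}}| + |\set{j}{\can(S)_j = \can(T)_j = {+}}|.
\]
Next I would apply \cref{lem:canopy}\,(iii), which identifies the first summand with $\asc(T)$ and the second with $\des(S)$. Hence $\agree(S,T) = \des(S) + \asc(T)$ for every Tamari interval $S \le T$, so
\[
\set{S \le T}{\agree(S,T) = k} = \set{S \le T}{\des(S) + \asc(T) = k}
\]
as subsets of the set of intervals of~$\Tam(n)$ on binary trees with~$n$ nodes. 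The cardinality of the right-hand side is $\frac{2}{n(n+1)} \binom{n+1}{k+2} \binom{3n}{k}$ by \cref{thm:fVectorCanonicalComplex}, which concludes the argument.

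Since the proof is a two-line deduction from \cref{lem:canopy}, there is no genuine obstacle; the only subtlety is making sure one uses the inequality in the correct direction, namely $\can(S) \le \can(T)$ rather than the reverse, so as to rule out the ``crossing'' disagreements $\can(S)_j = {+}$, $\can(T)_j = {-}$, which is exactly what \cref{lem:canopy}\,(ii) provides. The identity $\agree(S,T) = \des(S)+\asc(T)$ will likewise feed directly into the analogous transposition of \cref{thm:fVectorDiagonal} in terms of canopy agreements, since the binomial $\binom{\des(S)+\asc(T)}{k}$ occurring in $b_{n,k}$ then becomes $\binom{\agree(S,T)}{k}$ verbatim.
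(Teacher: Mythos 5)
Your proposal is correct and takes essentially the same route as the paper, which states \cref{coro:canopy1} as an immediate transposition of \cref{thm:fVectorCanonicalComplex} via \cref{lem:canopy}\,(iii), \ie via the pointwise identity $\agree(S,T) = \des(S) + \asc(T)$. One minor remark: the splitting of the agreement positions into those where both canopies are $-$ and those where both are $+$ is automatic (canopy entries take only the two values $\pm$), so part~(ii) of the lemma is not needed for that step — its real role is inside the proof of part~(iii), where $\can(S)_j \le \can(T)_j = {-}$ forces $\can(S)_j = {-}$.
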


\begin{corollary}
\label{coro:canopy2}
For any~$n,k \in \N$, we have
\[
\sum_{S \le T} \binom{\agree(S,T)}{k} = \sum_{S \le T} \binom{\des(S) + \asc(T)}{k} = \frac{2}{(3n+1)(3n+2)} \binom{n-1}{k} \binom{4n+1-k}{n+1},
\]
where the sums range over the intervals~$S \le T$ of the Tamari lattice~$\Tam(n)$ on binary trees with~$n$ nodes.
\end{corollary}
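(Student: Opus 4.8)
The plan is to reduce \cref{coro:canopy2} to \cref{lem:canopy} together with \cref{thm:fVectorDiagonal}, with no new computation: the key point is that the canopy agreement statistic coincides with $\des(S) + \asc(T)$ on every Tamari interval, so the first equality in the statement holds termwise in the sum over intervals.

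First I would establish the identity $\agree(S,T) = \des(S) + \asc(T)$ for every interval $S \le T$ of $\Tam(n)$. By \cref{lem:canopy}\,(ii), the canopy of $S$ is componentwise $\le$ the canopy of $T$ for the order $- \le +$; hence at each position $j$ the two canopies either agree (both entries $-$, or both $+$) or satisfy $\can(S)_j = {-}$ and $\can(T)_j = {+}$. In particular, $j$ is a position of agreement if and only if $\can(S)_j = \can(T)_j$. By \cref{lem:canopy}\,(iii), the number of positions where both canopies are $-$ equals $\asc(T)$ and the number where both are $+$ equals $\des(S)$; these two sets of positions are disjoint, so adding them gives $\agree(S,T) = \asc(T) + \des(S)$.

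Given this, the first equality of \cref{coro:canopy2} is immediate, since $\binom{\agree(S,T)}{k} = \binom{\des(S) + \asc(T)}{k}$ for each interval $S \le T$, and so the two sums over intervals agree term by term. The second equality is then exactly \cref{thm:fVectorDiagonal}, because $b_{n,k}$ is by definition $\sum_{S \le T} \binom{\des(S) + \asc(T)}{k}$, where the sum ranges over the intervals of $\Tam(n)$.

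I do not expect a genuine obstacle here: all the content lies in \cref{lem:canopy} — the monotonicity of the canopy along Tamari cover relations, and the reading of $-$ and $+$ entries as ascents and descents — and in \cref{thm:fVectorDiagonal} itself. The one step not to skip is the appeal to \cref{lem:canopy}\,(ii) to rule out a disagreement of the form $\can(S)_j = {+}$, $\can(T)_j = {-}$; without it, the notion of agreement would not split cleanly into the two cases counted by part\,(iii). The identical argument proves \cref{coro:canopy1}, with plain cardinalities in place of the binomial sums and \cref{thm:fVectorCanonicalComplex} in place of \cref{thm:fVectorDiagonal}.
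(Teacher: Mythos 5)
Your proposal is correct and matches the paper's approach: the paper likewise obtains \cref{coro:canopy2} by combining \cref{lem:canopy} (which gives $\agree(S,T)=\des(S)+\asc(T)$ termwise on every interval) with \cref{thm:fVectorDiagonal}, merely stating the transposition without writing out the details you supply. The only quibble is that \cref{lem:canopy}\,(iii) alone already yields the identity, since agreements split into ``both $-$'' and ``both $+$'' by definition; part\,(ii) is really an ingredient of the proof of\,(iii) rather than an extra step needed here.
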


\begin{remark}
For~$k = n-1$ in both \cref{coro:canopy1,coro:canopy2}, we recover that the number of synchronized Tamari intervals (\ie with~$\agree(S,T) = n-1$) is given by
\[
\frac{2}{n(n+1)} \binom{3n}{n-1} = \frac{2}{(n+1)(2n+1)} \binom{3n}{n} = \frac{2}{(3n+1)(3n+2)} \binom{3n+2}{n+1}.
\]
See \cite{PrevilleRatelleViennot,FangPrevilleRatelle}.
\end{remark}

\begin{remark}
Note that the first equalities of \cref{coro:canopy1,coro:canopy2} follow from~\cite[Sect.~5]{Chapoton2}.
The approach of~\cite[Sect.~5]{Chapoton2} is however a bit of a detour as it passes again through generating functions, when the simple observation of \cref{lem:canopy}\,(iii) suffices.
\end{remark}

\subsubsection{Dyck paths}
\label{subsubsec:DyckPaths}

Recall that a \defn{Dyck path} of semilength~$n$ is a path from~$(0,0)$ to~$(2n,0)$ using $n$ up steps~$(1,1)$ (denoted~$U$) and $n$ down steps~$(1,-1)$ (denoted~$D$) and never passing below the horizontal axis.
We denote by~$\pi$ the standard bijection from binary trees to Dyck paths.
Namely, the Dyck path~$\pi(T)$ corresponding to a binary tree~$T$ is obtained by walking clockwise around the contour of~$T$ and marking an~$U$ step when finding a leaf and a $D$ step when walking back an edge~$j \to i$ with~$i < j$.
Note that~$\pi$ transports the rotation on binary trees to the Tamari shift on Dyck paths, which exchanges a $D$ step preceding an $U$ step with the corresponding excursion (meaning the longest subpath which stays above this $U$ step).
See \cref{fig:ascentsDescentsCanopyDyckPathsSchnyderWoods,fig:diagonalAssociahedronCanopyDyckPathsSchnyderWoods} for illustrations.
The following lemma is classical and immediate.

\begin{lemma}
\label{lem:DyckPaths}
The bijection~$\pi$ from binary trees to Dyck path sends:
\begin{itemize}
\item the ascents of~$T$ to the \defn{valleys} of~$\pi(T)$ (a $D$ step followed by an~$U$~step),
\item the descents of~$T$ to the \defn{double falls} of~$\pi(T)$ (two consecutive~$D$ steps),
\item the edges on the left branch of~$T$ to the \defn{contacts} of~$\pi(T)$ (its points on the horizontal axis).
\end{itemize}
\end{lemma}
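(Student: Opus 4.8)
The plan is to reduce everything to the recursive structure of~$\pi$ and then run a single induction on the number of nodes of~$T$. Recall that splitting a nonempty binary tree~$T$ at its root into its left subtree~$L$ and its right subtree~$R$ corresponds, under the contour walk defining~$\pi$, to the factorization
\[
\pi(T) = \pi(L)\,U\,\pi(R)\,D ,
\]
where $\pi$ sends the empty tree to the empty path and $U\,\pi(R)\,D$ is a prime Dyck path sitting at axis level immediately to the right of~$\pi(L)$. (This is the standard dictionary between binary trees and Dyck paths, and one checks it at once by unwinding the clockwise traversal.) Under this splitting the three tree statistics decompose additively up to an explicit boundary term: $\asc(T) = \asc(L) + \asc(R) + [L \neq \emptyset]$ and $\des(T) = \des(L) + \des(R) + [R \neq \emptyset]$, since the root carries a left (resp.~right) edge exactly when~$L$ (resp.~$R$) is nonempty, and likewise $\ell(T) = \ell(L) + [L \neq \emptyset]$, the left branch of~$T$ acquiring the edge from the root to the root of~$L$ precisely when~$L$ is nonempty.

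Next I would read off the matching increments on the Dyck-path side from the factorization $\pi(T) = \pi(L)\,U\,\pi(R)\,D$. The only new patterns of the relevant kinds appear at the two seams: a valley $DU$ is created at the junction $\pi(L)\mid U$ exactly when $\pi(L)$ is nonempty, hence exactly when~$L$ is nonempty; a double fall $DD$ is created at the junction $\pi(R)\mid D$ exactly when $\pi(R)$ is nonempty, hence exactly when~$R$ is nonempty; and a new return of the path to the axis appears at the end of~$\pi(L)$, again exactly when~$L$ is nonempty (if $L=\emptyset$ then $\pi(T)=U\,\pi(R)\,D$ is itself prime). No other valley, double fall, or return to the axis is created. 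Comparing these three increments with the three tree increments from the previous paragraph, and checking the base case $T=\emptyset$ (empty path: no ascents, descents or left-branch edges, and no valleys, double falls or interior contacts), closes the induction for all three assertions simultaneously.

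I do not expect a real obstacle: the statement is classical and, as noted, immediate. The only thing to be careful with is fixing conventions consistently --- which of the $n+1$ leaves of~$T$ is the one not recorded by an up-step, the orientation of the clockwise walk, and whether a ``contact'' is counted with or without the endpoints of~$\pi(T)$ --- and the inductive comparison is insensitive to these choices because the three equivalences ``$L\neq\emptyset$ $\iff$ the root of~$T$ has a left child $\iff$ $\pi(L)\neq\emptyset$'' (and their right-hand analogues) are exactly what synchronise the boundary terms on the two sides. Alternatively, one could avoid the induction and argue directly on the contour walk of~$T$, tracking when an ascended left edge is immediately preceded by a discovered leaf, when an ascended right edge is immediately preceded by another ascended edge, and when the walk climbs back onto the left branch of~$T$; the recursive argument above is simply a clean way of organising this.
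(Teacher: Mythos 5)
Your argument is correct. Note, however, that the paper does not prove this lemma at all: it simply declares it ``classical and immediate'' and moves on, so there is no proof to compare yours against. Your induction on the factorization $\pi(T)=\pi(L)\,U\,\pi(R)\,D$ is a clean way to supply the missing argument, and you have chosen the one recursive convention that is actually consistent with the statement (with $U\,\pi(L)\,D\,\pi(R)$, for instance, ascents would go to double rises rather than valleys, so the claim that the comparison is wholly insensitive to the orientation of the contour walk is slightly too generous --- the mirror convention swaps the roles of the two statistics). The seam analysis is right: the only new valley appears at $\pi(L)\mid U$ iff $L\neq\emptyset$, the only new double fall at $\pi(R)\mid D$ iff $R\neq\emptyset$, and the only new interior return to the axis at the end of $\pi(L)$ iff $L\neq\emptyset$, matching the three boundary terms $[L\neq\emptyset]$, $[R\neq\emptyset]$, $[L\neq\emptyset]$ in $\asc$, $\des$, $\ell$. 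The one genuine convention issue you correctly flag is the contact count: read literally, ``its points on the horizontal axis'' gives $\ell(T)+2$ (the two endpoints are always on the axis), so ``contacts'' must be understood as the interior returns, exactly the points your induction tracks.
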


Using \cref{lem:DyckPaths}, we can transpose \cref{thm:fVectorCanonicalComplex,thm:fVectorDiagonal} in terms of Dyck paths.
We denote by~$\val(P)$ (resp.~$\df(P)$) the number of valleys (resp.~of double falls) of a Dyck path~$P$.

\begin{corollary}
\label{coro:DyckPaths1}
For any~$n,k \in \N$, we have
\[
|\set{P \le Q}{\df(P) + \val(Q) = k}| = \frac{2}{n(n+1)} \binom{n+1}{k+2} \binom{3n}{k},
\]
where~$P \le Q$ are intervals of the Tamari lattice~$\Tam(n)$ on Dyck paths of semilength~$n$.
\end{corollary}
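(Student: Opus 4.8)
The plan is to observe that \cref{coro:DyckPaths1} is a direct transport of \cref{thm:fVectorCanonicalComplex} along the bijection~$\pi$ from binary trees to Dyck paths, so essentially no new work is required beyond citing the already-established facts. First I would recall that~$\pi$ is a bijection from the set of binary trees with~$n$ nodes to the set of Dyck paths of semilength~$n$, and that by construction it transports the rotation on binary trees to the Tamari shift on Dyck paths; hence it is an isomorphism from~$\Tam(n)$ on binary trees to~$\Tam(n)$ on Dyck paths. In particular, an interval~$S \le T$ of the Tamari lattice on binary trees corresponds bijectively to the interval~$\pi(S) \le \pi(T)$ of the Tamari lattice on Dyck paths, and conversely every interval~$P \le Q$ on Dyck paths arises in this way.

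Next I would invoke \cref{lem:DyckPaths}, which asserts that~$\pi$ sends the ascents of a binary tree to the valleys of the corresponding Dyck path, and the descents to the double falls. Applying this to both endpoints of an interval gives~$\df(\pi(S)) = \des(S)$ and~$\val(\pi(T)) = \asc(T)$, whence
\[
\df(\pi(S)) + \val(\pi(T)) = \des(S) + \asc(T)
\]
for every Tamari interval~$S \le T$. Consequently the bijection~$(S,T) \mapsto (\pi(S), \pi(T))$ restricts, for each~$k$, to a bijection between $\set{S \le T}{\des(S) + \asc(T) = k}$ and $\set{P \le Q}{\df(P) + \val(Q) = k}$, so the two sets have the same cardinality.

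Finally I would conclude by quoting \cref{thm:fVectorCanonicalComplex}, which evaluates~$|\set{S \le T}{\des(S) + \asc(T) = k}|$ as $\frac{2}{n(n+1)} \binom{n+1}{k+2} \binom{3n}{k}$; combined with the equality of cardinalities just established, this yields the claimed formula for~$|\set{P \le Q}{\df(P) + \val(Q) = k}|$. There is no genuine obstacle here: the only thing to be slightly careful about is checking that \cref{lem:DyckPaths} is indeed stated for a fixed tree (so that it applies verbatim to~$S$ and to~$T$ separately) and that~$\pi$ respects the order in the precise direction needed, both of which are recorded in the text preceding \cref{lem:DyckPaths}. An entirely analogous argument, replacing \cref{thm:fVectorCanonicalComplex} by \cref{thm:fVectorDiagonal}, would give the companion statement for the binomial sums~$b_{n,k}$.
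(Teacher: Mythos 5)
Your proposal is correct and is exactly the argument the paper intends: the corollary is stated as an immediate transport of \cref{thm:fVectorCanonicalComplex} through the bijection~$\pi$, using \cref{lem:DyckPaths} to identify descents with double falls and ascents with valleys. No further comment is needed.
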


\begin{corollary}
\label{coro:DyckPaths2}
For any~$n,k \in \N$, we have
\[
\sum_{P \le Q} \binom{\df(P) + \val(Q)}{k} = \frac{2}{(3n+1)(3n+2)} \binom{n-1}{k} \binom{4n+1-k}{n+1},
\]
where the sum ranges over the intervals~$P \le Q$ of the Tamari lattice~$\Tam(n)$ on Dyck paths of semilength~$n$.
\end{corollary}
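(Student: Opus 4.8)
The plan is to deduce the statement directly from \cref{thm:fVectorDiagonal} by transporting everything through the standard bijection~$\pi$ from binary trees to Dyck paths. First I would recall that, as noted just before \cref{lem:DyckPaths}, the map~$\pi$ sends the rotation on binary trees to the Tamari shift on Dyck paths; hence it restricts to an order isomorphism between the Tamari lattice~$\Tam(n)$ on binary trees with~$n$ nodes and the Tamari lattice~$\Tam(n)$ on Dyck paths of semilength~$n$. Consequently the intervals~$P \le Q$ of the latter are exactly the pairs~$(\pi(S),\pi(T))$ where~$S \le T$ is an interval of the former, and this correspondence is a bijection between the two sets of intervals.

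Next I would invoke \cref{lem:DyckPaths}, which gives $\df(\pi(S)) = \des(S)$ and $\val(\pi(T)) = \asc(T)$ for all binary trees~$S$ and~$T$. Therefore, along the bijection above, $\df(P) + \val(Q) = \des(S) + \asc(T)$, so that
\[
\sum_{P \le Q} \binom{\df(P) + \val(Q)}{k} = \sum_{S \le T} \binom{\des(S) + \asc(T)}{k},
\]
where the sums range over the intervals of~$\Tam(n)$ on Dyck paths of semilength~$n$, respectively on binary trees with~$n$ nodes. The right-hand side is precisely~$b_{n,k}$ by the definition used in \cref{thm:fVectorDiagonal}, and that theorem evaluates it as $\frac{2}{(3n+1)(3n+2)} \binom{n-1}{k} \binom{4n+1-k}{n+1}$, which is the claimed formula. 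Exactly the same argument, using parts (i)--(iii) of \cref{lem:canopy} instead of \cref{lem:DyckPaths}, would establish \cref{coro:canopy2}, and dropping the binomial coefficient (the case where one sums~$[\df(P)+\val(Q)=k]$ rather than~$\binom{\df(P)+\val(Q)}{k}$) would establish \cref{coro:DyckPaths1} from \cref{thm:fVectorCanonicalComplex}.

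There is no genuine obstacle here: the corollary is a pure relabelling of \cref{thm:fVectorDiagonal}, and the only ingredients needed — that~$\pi$ is an order isomorphism of the two incarnations of~$\Tam(n)$, and that it matches double falls with descents and valleys with ascents — are classical and already recorded in \cref{lem:DyckPaths} together with the surrounding discussion. The single point that I would take a sentence to make explicit is that~$\pi$ is truly an isomorphism of posets, not merely a bijection intertwining the two covering relations; but this is immediate since~$\pi$ is a bijection and the cover relations on both sides correspond to each other under it.
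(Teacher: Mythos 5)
Your proposal is correct and is exactly the paper's (implicit) argument: the paper derives \cref{coro:DyckPaths2} by transporting \cref{thm:fVectorDiagonal} through the bijection~$\pi$, using \cref{lem:DyckPaths} to match descents with double falls and ascents with valleys. You merely spell out the routine point that~$\pi$ is an order isomorphism, which the paper leaves implicit.
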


\subsubsection{Triangulations and minimal realizers}
\label{subsubsec:triangulations}

We now consider the bijection of~\cite{BernardiBonichon} from Tamari intervals to rooted triangulations using Schnyder woods.
Schnyder woods were introduced in~\cite{Schnyder} for straightline embedding purposes, and the structure of Schnyder woods was investigated in particular in~\cite{Ossona, Propp, Felsner-latticesOrientations}.
We refer to \cite[Chap.~2]{Felsner} for a nice pedagogical presentation of Schnyder woods and their applications.

Recall that a \defn{planar map}~$M$ is an embedding of a planar graph on the sphere, considered up to continuous deformations.
A \defn{face} of~$M$ is a connected component of the complement of~$M$, and a \defn{corner} is a pair of consecutive edges around a vertex.
A \defn{rooted} map is a map where a root corner is marked.
The face containing this corner is then considered as the \defn{external} face, and the vertices and edges of this external face are the external vertices and edges.
A \defn{triangulation} is a map where all faces have degree~$3$.
Euler formula implies that a rooted triangulation with $n$ internal vertices has~$3n$ internal edges and $2n+1$ internal triangles.

Consider a rooted triangulation~$M$ and denote by~$v_0, v_1, v_2$ the external vertices of~$M$ counterclockwise around the external face, and by~$U$ the internal vertices of~$M$.
A \defn{realizer} (or \defn{Schnyder wood}~\cite{Schnyder}) of~$M$ is an orientation and coloring with colors~$\{0,1,2\}$ of the edges of~$M$ such that
\begin{itemize}
\item for each~$i \in \{0,1,2\}$, the $i$-edges form a tree with vertices~$U \cup \{v_i\}$ oriented towards~$v_i$,
\item counterclockwise around each internal vertex, we see a $0$-source, some $2$-targets, a $1$-source, some $0$-targets, a $2$-source, and some $1$-targets. (Note that some means possibly none.)
\end{itemize}
(An $i$-edge is an edge colored $i$, and an $i$-source or $i$-target is the source or target of and $i$-edge.)
A realizer is \defn{minimal} (resp.~\defn{maximal}) if it contains no clockwise (resp.~counterclockwise) cycle.
It was observed in~\cite{Ossona, Propp, Felsner-latticesOrientations} that the Schnyder woods on a given triangulation~$M$ have the structure of a distributive lattice, where the cover relations correspond to reorientation of certain clockwise cycles.
This has the following immediate consequence.

\begin{theorem}[\cite{Ossona, Propp, Felsner-latticesOrientations}]
Every triangulation has a unique minimal (resp.~maximal) realizer.
\end{theorem}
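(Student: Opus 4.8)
The plan is to treat this exactly as the preceding sentence announces: an immediate consequence of the distributive-lattice structure on the set of realizers. Fix a rooted triangulation~$M$ and let~$\mathcal{R}(M)$ denote its set of realizers. First I would recall Schnyder's theorem~\cite{Schnyder}, which guarantees that~$\mathcal{R}(M)$ is nonempty; then, by~\cite{Ossona, Propp, Felsner-latticesOrientations}, the set~$\mathcal{R}(M)$, equipped with the order whose cover relations are the reorientations of certain clockwise cycles, is a finite lattice (in fact a distributive one, although distributivity plays no role here). In particular it has a least element~$\hat 0 = \bigwedge_{R \in \mathcal{R}(M)} R$ and a greatest element~$\hat 1 = \bigvee_{R \in \mathcal{R}(M)} R$, both unique.

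Next I would match the combinatorial extremality condition with lattice extremality. Orienting the lattice order so that reorienting a clockwise cycle goes downwards (the opposite convention merely swaps the words \emph{minimal} and \emph{maximal} throughout), the cover-relation description of~\cite{Ossona, Propp, Felsner-latticesOrientations} says precisely that a realizer~$R$ admits a realizer strictly below it in~$\mathcal{R}(M)$ if and only if~$R$ contains a clockwise cycle. Hence a realizer is minimal in the sense of the statement (no clockwise cycle) exactly when nothing lies strictly below it in~$\mathcal{R}(M)$. This pins it down: on the one hand~$\hat 0$ contains no clockwise cycle, so a minimal realizer exists; on the other hand, if~$R_1$ and~$R_2$ are minimal realizers, then~$R_1 \wedge R_2 \le R_1$ together with the fact that nothing lies strictly below~$R_1$ forces~$R_1 \wedge R_2 = R_1$, and symmetrically~$R_1 \wedge R_2 = R_2$, so~$R_1 = R_2 = \hat 0$. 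The statement for the maximal realizer follows by the symmetric argument, exchanging clockwise with counterclockwise cycles and meets with joins, so that the unique maximal realizer is~$\hat 1$.

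The only place where the cited papers are genuinely used is the identification in the previous paragraph, namely that ``contains no clockwise cycle'' coincides with being extremal for the lattice order; this is exactly the content of their description of the cover relations. Everything else reduces to the elementary fact that a nonempty finite lattice---indeed, already a nonempty finite meet-semilattice---has a unique minimum, and dually a unique maximum, so I do not expect any real obstacle. The one subtlety worth flagging is that the nonemptiness of~$\mathcal{R}(M)$ is not formal and must be imported from Schnyder's existence theorem~\cite{Schnyder}; without it, uniqueness would be vacuous and existence false.
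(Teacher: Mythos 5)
Your proposal is correct and matches the paper's (implicit) argument exactly: the paper states the theorem as an "immediate consequence" of the distributive-lattice structure on realizers established in the cited works, which is precisely the existence of a unique minimum and maximum in a nonempty finite lattice, combined with the identification of "no clockwise (resp.\ counterclockwise) cycle" with lattice extremality. Your flagging of the two imported ingredients — Schnyder's existence theorem for nonemptiness, and the cover-relation description for the extremality identification — is exactly where the cited references do the real work.
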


Consider now a realizer~$(T_0, T_1, T_2)$ of a rooted triangulation~$M$.
Walking clockwise around~$T_0$, we define two Dyck paths~$P$ and~$Q$ as follows:
\begin{itemize}
\item $P$ has an~$U$ (resp.~$D$) step each time we move farther from~$v_0$ (resp.~closer to~$v_0$),
\item $Q$ has an~$U$ step each time we move farther from~$v_0$ (except the first step), and a $D$ step each time we pass a $1$-target.
\end{itemize}
See \cref{fig:ascentsDescentsCanopyDyckPathsSchnyderWoods,fig:diagonalAssociahedronCanopyDyckPathsSchnyderWoods} for illustrations.
This map was defined in~\cite{BernardiBonichon}, where it is proved that it behaves very nicely with respect to three lattice structures on Dyck paths (the Stanley lattice, the Tamari lattice and the Kreweras lattice).
Here, we will use only the connection to the Tamari lattice, but we first make an immediate observation.
We call \defn{intermediate nodes} of a rooted tree~$T$ the nodes which are neither the root, nor the leaves of~$T$.

\begin{lemma}
\label{lem:intermediateNodesRealizers}
Consider the pair~$(P,Q)$ of Dyck paths obtained from a realizer~$(T_0, T_1, T_2)$. Then
\begin{itemize}
\item the double falls of~$P$ correspond to the intermediate nodes of~$T_0$,
\item the valleys of~$Q$ correspond to the intermediate nodes of~$T_1$,
\item the contacts of~$P$ correspond to the corners of edges of~$T_0$ incident to~$v_0$.
\end{itemize}
\end{lemma}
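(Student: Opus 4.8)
The plan is to read all three statistics off the clockwise contour walk of the plane tree~$T_0$ rooted at~$v_0$, which is exactly the walk used to build~$P$ and~$Q$. Recall the elementary dictionary for the contour word of a plane tree~$\tau$ (the word over~$\{U,D\}$ recording the downward, resp.\ upward, traversals of its edges along the clockwise tour): its height at a point is the distance in~$\tau$ from the root to the vertex then visited, so its contacts (returns to height~$0$) are exactly the visits of the root; and a factor~$DD$ occurs precisely when the walk, having just climbed back up to a vertex~$v$ from its last subtree, at once climbs back up to the parent of~$v$, which happens once for each~$v$ that has both a parent and a child, i.e.\ once for each intermediate node of~$\tau$.

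For the first and third claims, note that~$P$ is literally the contour word of~$T_0$. Hence the double falls of~$P$ are the~$DD$-factors of that word, in bijection with the intermediate nodes of~$T_0$. And the contacts of~$P$ are the visits of~$v_0$ along the walk; since two consecutive visits of~$v_0$ are separated by exactly one tour of the subtree of~$T_0$ hanging from one~$T_0$-edge at~$v_0$, attaching to each contact the corner at~$v_0$ entered just afterwards gives the announced bijection with the corners of the~$T_0$-edges incident to~$v_0$.

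For the second claim, the extra ingredient is the localization of the incoming~$1$-edges along the contour walk of~$T_0$, supplied by the second axiom of a realizer. Rewritten in clockwise order at an internal vertex~$v$, that axiom reads: outgoing~$0$-edge, incoming~$1$-edges, outgoing~$2$-edge, incoming~$0$-edges, outgoing~$1$-edge, incoming~$2$-edges. So, as the contour walk enters~$v$ from its~$T_0$-parent, it first passes the (consecutive) incoming~$1$-edges at~$v$, then the outgoing~$2$-edge of~$v$, which contributes no letter to~$Q$, then the~$T_0$-edges from~$v$ to its~$T_0$-children. Consequently the~$D$-steps of~$Q$ group into one block per vertex~$v$, of length the number of~$T_1$-children of~$v$, and in~$Q$ these blocks are separated by single~$U$-steps (the downward~$T_0$-traversals, except for the first). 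It remains to check that a block containing at least one~$D$ is immediately followed in~$Q$ by a~$U$-step: if~$v$ has a~$T_0$-child this is immediate from the display above; otherwise the walk backtracks towards~$v_0$, emitting no letter of~$Q$, since upward~$T_0$-traversals and traversals of~$2$-edges are ignored and no further incoming~$1$-edge is met on the way out, until the next downward~$T_0$-traversal, which is a~$U$. Granting this, the valleys of~$Q$ are exactly the vertices having at least one~$T_1$-child; since every such vertex lies in~$U$ and is therefore a non-root node of~$T_1$, these are precisely the intermediate nodes of~$T_1$.

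The conceptual point — localizing the incoming~$1$-edges through the realizer axiom — is short; the real work is bookkeeping. One must match the precise rotation and orientation conventions of~\cite{BernardiBonichon} so that the block of incoming~$1$-edges at~$v$ falls where asserted relative to the~$T_0$-edges at~$v$, and one must handle the boundary behaviour of the walk — in particular its first and last steps — using the structure of the realizer near~$v_0$ and~$v_1$, so as to ensure in the argument above that the last~$D$-block of~$Q$ is empty and that no valley is lost or spuriously created. This boundary analysis is the only genuinely delicate part.
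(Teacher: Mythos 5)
The paper offers no proof of this lemma at all: it is introduced with the phrase ``we previously make an immediate observation'' and stated without argument, so there is no proof of record to compare yours against, and your write-up is already the more detailed treatment. Your handling of the first and third bullets is correct and standard: $P$ is the contour word of the plane tree $T_0$, its $DD$-factors are in bijection with the non-root nodes having a child, and its returns to the axis with the visits of $v_0$. Your strategy for the second bullet is also the right one: the cyclic condition at an internal vertex $v$ (clockwise: $0$-source, $1$-targets, $2$-source, $0$-targets, $1$-source, $2$-targets) places the incoming $1$-edges of $v$ in a single contiguous block of its rotation, so along the contour walk of $T_0$ the $D$-steps of $Q$ come in one run per internal vertex, of length the number of its $T_1$-children, and the valley count reduces to counting the vertices whose run is nonempty.

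However, the proof of the second bullet is not complete, and the part you defer as ``bookkeeping'' is where the actual content lies. Two things are left unestablished. First, your analysis needs the in-$1$ block at $v$ to be swept at the first arrival at $v$, before any $T_0$-child; under the opposite orientation convention it is swept just before the walk leaves $v$ for good, in which case the $D$-runs of a vertex and of its $T_0$-ancestors can merge into a single maximal descent and the identification ``valleys $=$ nonempty runs'' breaks. The paper's one-line description of $Q$ does not fix this (read literally it does not even produce a balanced path on the triangulation with two inner vertices both adjacent to $v_0$), so the convention must be settled against the actual construction of~\cite{BernardiBonichon} rather than assumed. Second, even granting the first point, a nonempty run yields a valley only if a $U$ follows it, so you need the final $D$-run of $Q$ to be empty, i.e., the last inner vertex first-visited by the tour to have no incoming $1$-edge. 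This is a genuine property of realizers, not a formality: it must be extracted from the Schnyder conditions near the outer face (for instance, the inner vertex of the inner triangle incident to the outer edge $v_0v_2$ has its outgoing $0$- and $2$-edges consecutive in its rotation, which forces its block of incoming $1$-edges to be empty; one must then check that this is indeed the vertex where the tour ends). Without these two points the argument can miscount the valleys by one, so the second bullet should not yet be considered proved.
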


We now restrict to minimal realizers to obtain a bijection between rooted triangulations and Tamari intervals, as described in~\cite{BernardiBonichon}.
We denote by~$\BB(M)$ the pair of Dyck paths~$(P,Q)$ obtained from the minimal realizer of~$M$.

\begin{theorem}[\cite{BernardiBonichon}]
\label{thm:BernardiBonichon}
The map~$\BB$ is a bijection from rooted triangulations with $n$ internal vertices to the intervals of the Tamari lattice on Dyck paths of semilength~$n$.
\end{theorem}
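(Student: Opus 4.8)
This is the Bernardi--Bonichon bijection~\cite{BernardiBonichon}, so the plan is to recall the architecture of their proof rather than to reprove it in detail; it splits into well-definedness, the Tamari-interval property, and invertibility. First I would check that $\BB(M) = (P,Q)$ is a pair of Dyck paths of semilength~$n$: the three trees of the minimal realizer span $U \cup \{v_0\}$, $U \cup \{v_1\}$ and $U \cup \{v_2\}$, so each has exactly~$n$ edges, accounting for the $3n$ internal edges of~$M$; the clockwise contour walk around~$T_0$ then produces~$n$ up and~$n$ down steps, so~$P$ is a Dyck path, while a companion reading of the same walk, recording down steps at the $1$-targets, produces~$Q$, which a short count shows to be a Dyck path of semilength~$n$ as well --- the Schnyder local condition around each internal vertex is exactly what keeps~$Q$ from dipping below the axis.

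The heart of the matter is to show that $(P,Q)$ is a Tamari interval. I would first prove the weaker statement that~$P$ lies weakly below~$Q$ in the Stanley order, \ie every prefix of~$Q$ contains at least as many down steps as the corresponding prefix of~$P$; this is a direct comparison of the two readings of the walk around~$T_0$, using that a $1$-target is passed only once the $0$-subtree below it has been fully explored. I would then upgrade Stanley-comparability to Tamari-comparability by exhibiting an explicit sequence of Tamari moves from~$P$ to~$Q$: the $1$-edges of the realizer record which $DU$-factors must be slid past their excursions, and minimality of the realizer --- the absence of a clockwise oriented cycle --- is precisely what makes these slides mutually compatible, so that they can be applied in any order and land on~$Q$.

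For injectivity and surjectivity I would build the inverse map. Given a Tamari interval~$P \le Q$ of semilength~$n$, decode~$P$ into a plane tree~$T_0$ on~$n+1$ vertices rooted at~$v_0$ via the standard contour bijection; the path~$Q$ together with the relation~$P \le Q$ then prescribes where to insert the remaining edges so as to triangulate the outer face, the key point being that by~\cref{lem:intermediateNodesRealizers} the double falls of~$P$ and the valleys of~$Q$ are matched with the intermediate nodes of~$T_0$ and of~$T_1$, and this matching dictates the placement of the $1$-edges; the $2$-edges and all orientations are then forced by the Schnyder local rule, one checks that no clockwise cycle is created (so the realizer obtained is the minimal one), and finally that the two constructions are mutually inverse. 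As a sanity check, rooted triangulations with~$n$ internal vertices and Tamari intervals on~$n$ nodes are both enumerated by $\tfrac{2}{(3n+1)(3n+2)}\binom{4n+1}{n+1}$, so it would in fact suffice to prove that~$\BB$ is injective \emph{or} surjective; but either of those already requires essentially this inverse construction.

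The step I expect to be the real obstacle is the middle one: showing that the paths read off the \emph{minimal} realizer form a genuine Tamari interval, and conversely that every Tamari interval is attained. This rests on the distributive-lattice structure on the Schnyder woods of a fixed triangulation~\cite{Ossona, Propp, Felsner-latticesOrientations} and on the interplay of the Stanley, Tamari and Kreweras orders, and is the technical core of~\cite{BernardiBonichon}. Since the present paper uses the bijection only as a black box --- to transport the statistics~$\des$ and~$\asc$, equivalently~$\df$ and~$\val$, over to triangulations --- I would content myself with citing it.
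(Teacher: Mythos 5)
The paper gives no proof of this statement: it is quoted verbatim from~\cite{BernardiBonichon} and used purely as a black box, exactly as you conclude at the end of your proposal. Your sketch of the architecture of the original argument (well-definedness of the two contour readings, the passage from Stanley- to Tamari-comparability via minimality of the realizer, and the inverse construction) is a faithful outline of the cited proof, so there is nothing to reconcile.
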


Using \cref{lem:intermediateNodesRealizers,thm:BernardiBonichon}, we can transpose \cref{thm:fVectorCanonicalComplex,thm:fVectorDiagonal} in terms of maps.
For a rooted triangulation~$M$, with minimal realizer~$(T_0, T_1, T_2)$, we denote by~$\intNodes(M)$ the number of intermediate nodes of~$T_0$ plus the number of intermediate nodes of~$T_1$.

\begin{corollary}
\label{coro:realizers1}
For any~$n,k \in \N$, we have
\[
|\set{M}{\intNodes(M) = k}| = \frac{2}{n(n+1)} \binom{n+1}{k+2} \binom{3n}{k},
\]
where the~$M$'s are the rooted triangulations with~$n$ internal vertices.
\end{corollary}

\begin{corollary}
\label{coro:realizers2}
For any~$n,k \in \N$, we have
\[
\sum_{M} \binom{\intNodes(M)}{k} = \frac{2}{(3n+1)(3n+2)} \binom{n-1}{k} \binom{4n+1-k}{n+1},
\]
where the sums range over all rooted triangulations~$M$ with~$n$ internal vertices.
\end{corollary}

\subsection{\cref{thm:fVectorCanonicalComplex} from triangulations}
\label{subsec:triangulations}

We now derive \cref{thm:fVectorCanonicalComplex} from triangulations using the following result of~\cite{FusyHumbert}.
It was obtained via a bijection from planar triangulations endowed with their minimal realizers to planar mobiles.
We state it here in terms of canopies of binary trees.

\begin{theorem}[{\cite[Coro.~2]{FusyHumbert}}]
\label{thm:FusyHumbert}
Let~$f_{i,j,k}$ denote the number of Tamari intervals~$S \le T$ with $i$ positions~$p$ where~$\can(S)_p = \can(T)_p = {-}$, with~$j$ positions~$p$ where~$\can(S)_p = \can(T)_p = {+}$, and with~$k$ positions~$p$ where~$\can(S)_p = {-}$ while~$\can(T)_p = {+}$.
Then the corresponding generating function~$F \eqdef F(u,v,w) \eqdef \sum_{i,j,k} f_{i,j,k} u^i v^j w^k$ is given by
\[
uvF = uU + vV + wUV - \frac{UV}{(1+U)(1+V)},
\]
where the series~$U \eqdef U(u,v,w)$ and~$V \eqdef V(u,v,w)$ satisfy the system
\begin{align*}
U & = (v+wU)(1+U)(1+V)^2 \\
V & = (u+wV)(1+V)(1+U)^2.
\end{align*}
\end{theorem}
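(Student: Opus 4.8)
The plan is to reconstruct the proof of \cite[Coro.~2]{FusyHumbert}, which proceeds via Schnyder woods and mobiles, and to recognize the present statement as its restatement through the dictionary of \cref{lem:canopy,lem:DyckPaths,lem:intermediateNodesRealizers}. First I would transport the problem along the Bernardi--Bonichon bijection of \cref{thm:BernardiBonichon}: it identifies a Tamari interval $S \le T$ on binary trees with $n$ nodes with a rooted triangulation $M$ with $n$ internal vertices carrying its minimal realizer $(T_0, T_1, T_2)$, in such a way that the Dyck paths $\pi(S), \pi(T)$ become the paths $P, Q$ read around $T_0$. Chaining \cref{lem:intermediateNodesRealizers} with \cref{lem:DyckPaths,lem:canopy}, the three statistics of the corollary become intrinsic quantities of the realizer: the number $i$ of ``both $-$'' canopy positions is the number of intermediate nodes of $T_1$, the number $j$ of ``both $+$'' positions is the number of intermediate nodes of $T_0$, and $k = n-1-i-j$ accounts for the remaining canopy positions. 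It thus suffices to enumerate rooted triangulations equipped with their minimal Schnyder wood, keeping three variables $u, v, w$ for these quantities.

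The heart of the argument is the master bijection for planar maps endowed with a fixed orientation, which \cite{FusyHumbert} applies in the guise of a bijection between triangulations carrying their minimal realizer and planar \emph{mobiles} --- plane trees with map-vertices and face-vertices and signed half-edges, obeying local degree and charge rules that encode the Schnyder local condition at each internal vertex together with the fact that every internal face is a triangle. I would then enumerate these mobiles by a root decomposition. Let $U$ (resp.\ $V$) be the generating function of the fringe subtree hanging from an edge of each of the two relevant signed types, with $u$, $v$, $w$ weighting the corners corresponding respectively to ``both $-$'', ``both $+$'', and ``$-$ then $+$'' positions. Reading off the local rule at a face-vertex --- one distinguished incoming edge, then a bounded pattern of further slots, each of which either carries or does not carry a dangling map-subtree (producing the factors $1+U$ and $(1+V)^2$, and symmetrically $1+V$ and $(1+U)^2$) and contributes either a terminal mark or a mark $w$ followed by a recursive continuation --- should produce exactly
\[
U = (v + wU)(1+U)(1+V)^2, \qquad V = (u + wV)(1+V)(1+U)^2 .
\]
Transferring from the edge-rooted series $U, V$ to the corner-rooted series $F$ by a standard re-rooting (dissymmetry) argument then yields the three contributions $uU$, $vV$, $wUV$ of the possible root-corner types, overcounted precisely by the configurations that are degenerate on both sides of the root corner; removing this overlap by inclusion--exclusion contributes the term $-UV/\big((1+U)(1+V)\big)$, and after the leaf-versus-corner normalization this reads $uvF = uU + vV + wUV - \frac{UV}{(1+U)(1+V)}$.

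I expect the main obstacle to be bookkeeping rather than conceptual: one must fix the sign and charge conventions of the mobiles so that their local rules faithfully encode the minimal Schnyder wood and the triangulation condition, and then check that the $u, v, w$ weights land precisely on the corners matched by \cref{lem:intermediateNodesRealizers}, with proper care for the root corner and the outer vertices $v_0, v_1, v_2$. As a fallback that stays entirely within the Tamari world, one could instead run the grafting decomposition of \cref{sec:graftingDecompositions} while also recording the canopy patterns of both $S$ and $T$: the canopy of $S$ is governed by its left spine and the canopy of $T$ by the way the factors $T_i$ are attached, so this yields a functional equation with a catalytic variable which, after the quadratic method and elimination, would have to be matched against the stated $F$; the difficulty here is that the three pattern statistics do not decouple as neatly as $\des(S)$ and $\asc(T)$ do in \cref{prop:quadraticEquationA} (compare \cref{subsubsec:refinementSeparate}), so an auxiliary catalytic variable seems unavoidable. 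A third, reportedly lighter, route would start from the direct bijection between Tamari intervals and blossoming trees of \cite{FangFusyNadeau}.
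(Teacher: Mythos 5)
This statement is not proved in the paper at all: it is imported verbatim from \cite[Coro.~2]{FusyHumbert}, and the only in-paper content attached to it is the translation of that reference's map-theoretic statistics into canopy statistics, which is exactly the dictionary of \cref{lem:canopy}, \cref{lem:DyckPaths} and \cref{lem:intermediateNodesRealizers}. Your reconstruction of that dictionary is correct: by \cref{lem:canopy}(ii) the canopy of $S$ is componentwise below that of $T$, so the three statistics $i,j,k$ do partition the $n-1$ positions, and chaining \cref{lem:canopy}(iii), \cref{lem:DyckPaths} and \cref{lem:intermediateNodesRealizers} through the Bernardi--Bonichon bijection does identify $i$ with the intermediate nodes of $T_1$ and $j$ with those of $T_0$. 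That part matches what the paper actually does.

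The gap is in the part the paper delegates to \cite{FusyHumbert} and that you set out to reconstruct. Everything that makes the theorem true --- the precise local rules of the mobiles produced by the master bijection, the verification that the root decomposition of those mobiles yields exactly $U = (v+wU)(1+U)(1+V)^2$ and $V = (u+wV)(1+V)(1+U)^2$ with the weights $u,v,w$ landing on the right corners, and the dissymmetry/inclusion--exclusion step producing the correction term $-UV/\bigl((1+U)(1+V)\bigr)$ --- is asserted rather than derived: you write that the local rule ``should produce exactly'' the system and defer the sign and charge conventions to bookkeeping. But those conventions are not bookkeeping; they are the entire content of the enumeration, and without fixing them one cannot even check that the exponents $(1+U)(1+V)^2$ versus $(1+V)(1+U)^2$ are the right way around. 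So as a standalone proof the proposal is incomplete; as a roadmap to the cited proof it is accurate. Your fallback suggestion (rerunning the grafting decomposition of \cref{sec:graftingDecompositions} with canopy-recording variables) is also not obviously viable as stated, since the quadratic method would have to be redone with a genuinely new catalytic structure, and you do not indicate how the resulting equation would be matched against the stated parametrization by $U$ and $V$.
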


\begin{corollary}
\label{coro:FusyHumbert}
The generating function~$A \eqdef A(t,z) \eqdef \sum a_{n,k} t^n z^k$ is given by
\begin{equation}
\label{eq:FusyHumbert1}
t z^2 A = 2tz S + t S^2 - \frac{S^2}{(1+S)^2},
\end{equation}
where the series~$S \eqdef S(t,z)$ satisfies
\begin{equation}
\label{eq:FusyHumbert2}
S = t(z+S)(1+S)^3.
\end{equation}
\end{corollary}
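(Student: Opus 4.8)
The plan is to obtain both \cref{eq:FusyHumbert1} and \cref{eq:FusyHumbert2} from \cref{thm:FusyHumbert} by the single specialization $u = v = tz$, $w = t$.

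First I would match the statistics. For a Tamari interval $S \le T$ on binary trees with $n$ nodes, \cref{lem:canopy}\,(ii) says that at each of the $n-1$ canopy positions $p$ exactly one of the patterns $(\can(S)_p, \can(T)_p) \in \{({-},{-}), ({-},{+}), ({+},{+})\}$ occurs; so the exponents $i, j, k$ attached to this interval in \cref{thm:FusyHumbert} satisfy $i + j + k = n - 1$. Moreover \cref{lem:canopy}\,(iii) gives $i = \asc(T)$ and $j = \des(S)$, hence $\des(S) + \asc(T) = i + j$ and $n = i + j + k + 1$. Summing over all Tamari intervals,
\[
A(t,z) = \sum_{S \le T} t^{n(S)} z^{\des(S) + \asc(T)} = \sum_{i,j,k \ge 0} f_{i,j,k}\, t^{i+j+k+1} z^{i+j} = t\, F(tz, tz, t),
\]
so it remains to evaluate $F$ at $u = v = tz$, $w = t$.

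Next I would exploit the symmetry of the defining system of \cref{thm:FusyHumbert} under this specialization: swapping its two equations amounts to swapping $U$ and $V$, so the unique pair of series $(U,V)$ in $t\,\bQ[z][[t]]$ solving the specialized system must satisfy $U = V$. Setting $S \eqdef U = V$, each equation becomes $S = (tz + tS)(1+S)(1+S)^2 = t(z+S)(1+S)^3$, which is \cref{eq:FusyHumbert2}. Plugging $u = v = tz$, $w = t$ and $U = V = S$ into the main identity of \cref{thm:FusyHumbert} gives
\[
t^2 z^2\, F(tz, tz, t) = 2tz\,S + t\,S^2 - \frac{S^2}{(1+S)^2};
\]
dividing by $t$ and using $A = t\, F(tz, tz, t)$ yields precisely \cref{eq:FusyHumbert1}.

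This argument has no genuinely hard step: the only points deserving care are the bookkeeping identifying $\des(S) + \asc(T)$ with $i + j$ and $n$ with $i + j + k + 1$ — which is exactly the content of \cref{lem:canopy} — and the uniqueness argument forcing $U = V$ after specialization. One should also note in passing that $F(tz, tz, t)$ is a well-defined element of $\bQ[[t,z]]$, since for each total degree there are only finitely many triples $(i,j,k)$ and each $f_{i,j,k}$ is finite.
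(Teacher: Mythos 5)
Your proof is correct and follows the same route as the paper: specialize \cref{thm:FusyHumbert} at $u=v=tz$, $w=t$, use $A(t,z)=t\,F(tz,tz,t)$ (which the paper attributes to \cref{coro:canopy1} but which, as you note, really rests on \cref{lem:canopy}), and observe that $U$ and $V$ coincide under this specialization. You merely supply two details the paper leaves implicit — the bookkeeping $i=\asc(T)$, $j=\des(S)$, $i+j+k=n-1$, and the symmetry-plus-uniqueness argument forcing $U=V$ — both of which are sound.
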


\begin{proof}
By \cref{coro:canopy1}, we have~$A(t,z) = t F(tz, tz, t)$.
Specializing~$u = v = tz$ and~$w = t$ in \cref{thm:FusyHumbert}, we thus obtain the expression for~$A(t,z)$ by observing that the series~$U(tz,tz,t)$ and~$V(tz,tz,t)$ coincide and denoting~$S(t,z) \eqdef U(tz,tz,t) = V(tz,tz,t)$.
\end{proof}

Differentiating \cref{eq:FusyHumbert1} with respect to the variable~$t$, we obtain
\begin{align}
\frac{\partial}{\partial t} (t z^2 A)
& = 2zS + 2tz \frac{\partial S}{\partial t} + S^2 + 2 t S  \frac{\partial S}{\partial t} - \frac{2 S }{(1+S)^2}  \frac{\partial S}{\partial t} + \frac{2 S^2}{(1+S)^3} \frac{\partial S}{\partial t} \nonumber \\
& = 2zS + S^2 + \frac{2}{(1+S)^3} \frac{\partial S}{\partial t} \Big( t(z+S)(1+S)^3 - S(1+S) + S^2 \Big) \nonumber \\
& = 2zS + S^2,
\label{eq:FusyHumbert3}
\end{align}
where the last equality follows from~\cref{eq:FusyHumbert2}.

We obtain by Lagrange inversion in~\cref{eq:FusyHumbert2} that for~$r \ge 1$,
\[
[t^n z^k] S^r = \frac{r}{n} [s^{n-r} z^k] \phi(s)^n,
\]
where~$\phi(s) \eqdef (z+s)(1+s)^3$.
Thus
\[
[t^n z^k] S^r = \frac{r}{n} [s^{n-r} z^k] (z+s)^n (1+s)^{3n} = \frac{r}{n} \binom{n}{k} \binom{3n}{k-r}.
\]
Hence, \cref{eq:FusyHumbert3} implies that
\[
a_{n,k} = [t^n z^k] A = \frac{1}{n+1} [t^n z^{k+2}] \frac{\partial}{\partial t} (t z^2 A) = \frac{1}{n+1} \big( 2 [t^n z^{k+1}] S + [t^n z^{k+2}] S^2 \big)
\]
is given by
\[
\frac{2}{n(n+1)} \left( \binom{n}{k+1} + \binom{n}{k+2} \right) \binom{3n}{k} = \frac{2}{n(n+1)} \binom{n+1}{k+2} \binom{3n}{k}.
\]

\begin{remark}
In fact, the recent direct bijection of~\cite{FangFusyNadeau} between Tamari intervals and blossoming trees enables to obtain \cref{thm:fVectorCanonicalComplex} in an even simpler way.
Details can be found in~\cite{FangFusyNadeau}.
\end{remark}

%%%%%%%%%%%%%%%%%%%%%%%%%%%%%%%%%%%%%%

\section{$q$-analogues}

We conclude this paper with an insightful observation due to a referee, which opens the door to a promising and exciting direction for future research.

Recall that \defn{$q$-analogues} in combinatorics are generalizations of classical numbers and formulas in which integers are replaced by polynomials in a variable $q$, in such a way that the original formulas are recovered when $q \to 1$.
They typically arise by refining a counting problem with an additional statistic (such as inversions), so that the answer becomes a generating function rather than a single number.

A basic building block is the \defn{$q$-integer}~$[n]_q \eqdef 1 + q + \cdots + q^{n-1} = \frac{1-q^n}{1-q}$, and from it the \defn{$q$-factorial}~$[n]_q! \eqdef [1]_q[2]_q\cdots[n]_q$.
The \defn{$q$-binomial coefficients} (or \defn{Gaussian binomial coefficients}) are then defined by
\[
\qbinom{n}{k} \eqdef \frac{[n]_q!}{[k]_q!\,[n-k]_q!} = \frac{(1-q^n)(1-q^{n-1})\cdots(1-q^{\,n-k+1})}{(1-q^k)(1-q^{k-1})\cdots(1-q)}.
\]
As $q \to 1$, one recovers the ordinary integer~$n$, factorial~$n!$, and binomial coefficient $\binom{n}{k}$.
Combinatorially, $\qbinom{n}{k}$ counts $k$-dimensional subspaces of $\mathbb{F}_q^n$ and also serves as a generating function for statistics such as inversions in binary words with $k$ ones and $n-k$ zeros.

\cref{thm:fVectorCanonicalComplex} states that
\[
\sum_{\ell = 0}^{n-1} \binom{n+1}{\ell+2} \binom{3n}{\ell} \bigg/ \binom{n+1}{2} = \binom{4n+1}{n+1} \bigg/ \binom{3n+2}{2}
\]
counts all Tamari intervals~\cite{Chapoton1}.
The following $q$-analogue was conjectured by an anonymous referee.

\begin{proposition}
\label{prop:referee1}
For any~$n,k \in \N$, 
\[
\sum_{\ell = 0}^{n-1} q^{\ell(\ell+2)} \qbinom{n+1}{\ell+2} \qbinom{3n}{\ell} \bigg/ \qbinom{n+1}{2} = \qbinom{4n+1}{n+1} \bigg/ \qbinom{3n+2}{2}.
\]
\end{proposition}

%sage: var('q')
%sage: for n in range(10):
%....:     print(bool(add(q^(k*(k+2)) * q_binomial(n+1, k+2) * q_binomial(3*n, k) / q_binomial(n+1, 2) for k in range(n)) == q_binomial(4*n+1, n+1) / q_binomial(3*n+2, 2)))

Further experiments lead us to extend \cref{prop:referee1}.
\cref{thm:fVectorDiagonal} states that
\[
\sum_{\ell = k}^{n-1} \binom{n+1}{\ell+2} \binom{3n}{\ell} \binom{\ell}{k} \bigg/ \binom{n+1}{2} = \binom{n-1}{k} \binom{4n+1-k}{n+1} \bigg/ \binom{3n+2}{2}.
\]
We observed the following $q$-analogue.

\begin{proposition}
\label{prop:referee2}
For any~$n,k \in \N$, 
\[
\sum_{\ell = k}^{n-1} q^{(\ell-k)(\ell+2)} \qbinom{n+1}{\ell+2} \qbinom{3n}{\ell} \qbinom{\ell}{k} \bigg/ \qbinom{n+1}{2} = \qbinom{n-1}{k} \qbinom{4n+1-k}{n+1} \bigg/ \qbinom{3n+2}{2}.
\]
\end{proposition}

%sage: var('q')
%sage: for n in range(10):
%....:     for k in range(n):
%....:         X = add(q^((l-k)*(l+2)) * q_binomial(n+1, l+2) * q_binomial(3*n, l) * q_binomial(l, k) / q_binomial(n+1, 2) for l in range(k,n))
%....:         Y = q_binomial(n-1, k) * q_binomial(4*n+1-k, n+1) / q_binomial(3*n+2, 2)
%....:         print(n, k, bool(X == Y))

\cref{prop:referee1,prop:referee2} can be proved for instance using computer algebra tools, such as creative telescoping~\cite{Zeilberger-1991-MCT,Zeilberger-1990-FAP,PetkovsekWilfZeilberger-1996-AB}.
We leave their combinatorial interpretations for future research.

%%%%%%%%%%%%%%%%%%%%%%%%%%%%%%%%%%%%%%

\section*{Aknowledgements}

VP thanks all participants of the ``2023 Barcelona Workshop: Homotopy theory meets polyhedral combinatorics'' (Mónica Blanco, Luis Crespo, Guillaume Laplante-Anfossi, Arnau Padrol, Eva Philippe, Julian Pfeifle, Daria Poliakova, Francisco Santos and Andy Tonks) where the question to understand the $f$-vector of the cellular diagonal of the associahedron was raised. VP is particularly grateful to Guillaume Laplante-Anfossi for various discussions on the cellular diagonal of the associahedron and for an amazing number of suggestions on the presentation of the paper, and to Francisco Santos for suggesting the rightmost picture of \cref{fig:diagonalAssociahedron}. VP thanks \'Eric Fusy for suggesting the bijective approach of \cref{subsec:triangulations} and answering technical questions on this approach.
We also thank Frédéric Chapoton, Florent Hivert, Pierre Lairez, and Gilles Schaeffer for interesting discussions and inputs on this paper.
Finally, we are grateful to anonymous referees for various comments and suggestions on previous versions of this paper, and in particular to the referee of Elect.~J.~Comb. for conjecturing \cref{prop:referee1}.

%%%%%%%%%%%%%%%%%%%%%%%%%%%%%%%%%%%%%%

\bibliographystyle{alpha}
\bibliography{TamariIntervals}

@phdthesis{Tamari,
	AUTHOR = {Dov Tamari},
	TITLE = {Mono\"ides pr\'eordonn\'es et cha\^ines de Malcev},
	SCHOOL = {Universit\'e Paris Sorbonne},
	YEAR = {1951},
}

@article {SaneblidzeUmble-diagonals,
	AUTHOR = {Saneblidze, Samson and Umble, Ronald},
	TITLE = {Diagonals on the permutahedra, multiplihedra and associahedra},
	JOURNAL = {Homology Homotopy Appl.},
	FJOURNAL = {Homology, Homotopy and Applications},
	VOLUME = {6},
	YEAR = {2004},
	NUMBER = {1},
	PAGES = {363--411},
}

@article {MarklShnider,
	AUTHOR = {Markl, Martin and Shnider, Steve},
	TITLE = {Associahedra, cellular {$W$}-construction and products of {$A_\infty$}-algebras},
	JOURNAL = {Trans. Amer. Math. Soc.},
	FJOURNAL = {Transactions of the American Mathematical Society},
	VOLUME = {358},
	YEAR = {2006},
	NUMBER = {6},
	PAGES = {2353--2372},
}

@incollection {Loday-diagonal,
	AUTHOR = {Loday, Jean-Louis},
	TITLE = {The diagonal of the {S}tasheff polytope},
	BOOKTITLE = {Higher structures in geometry and physics},
	SERIES = {Progr. Math.},
	VOLUME = {287},
	PAGES = {269--292},
	PUBLISHER = {Birkh\"{a}user/Springer, New York},
	YEAR = {2011},
}

@article {MasudaThomasTonksVallette,
	AUTHOR = {Masuda, Naruki and Thomas, Hugh and Tonks, Andy and Vallette, Bruno},
	TITLE = {The diagonal of the associahedra},
	JOURNAL = {J. \'{E}c. polytech. Math.},
	FJOURNAL = {Journal de l'\'{E}cole polytechnique. Math\'{e}matiques},
	VOLUME = {8},
	YEAR = {2021},
	PAGES = {121--146},
}

@article {LaplanteAnfossi,
	AUTHOR = {Laplante-Anfossi, Guillaume},
	TITLE = {The diagonal of the operahedra},
	JOURNAL = {Adv. Math.},
	FJOURNAL = {Advances in Mathematics},
	VOLUME = {405},
	YEAR = {2022},
	PAGES = {Paper No. 108494, 50},
}

@article {SaneblidzeUmble-comparingDiagonals,
	AUTHOR = {Saneblidze, Samson and Umble, Ronald},
	TITLE = {Comparing diagonals on the associahedra},
	JOURNAL = {Homology Homotopy Appl.},
	FJOURNAL = {Homology, Homotopy and Applications},
	VOLUME = {26},
	YEAR = {2024},
	NUMBER = {1},
	PAGES = {141--149},
}

@unpublished {DelcroixOgerLaplanteAnfossiPilaudStoeckl,
	AUTHOR = {B\'er\'enice Delcroix-Oger and Guillaume Laplante-Anfossi and Vincent Pilaud and Kurt Stoeckl},
	TITLE = {Cellular diagonals of permutahedra},
 	NOTE = {Preprint, \href{http://arxiv.org/abs/2308.12119}{\texttt{arXiv:2308.12119}}},
	YEAR = {2023},
}

@article {AlbertinPilaud,
	AUTHOR = {Albertin, Doriann and Pilaud, Vincent},
	TITLE = {The canonical complex of the weak order},
	JOURNAL = {Order},
	FJOURNAL = {Order. A Journal on the Theory of Ordered Sets and its Applications},
	VOLUME = {40},
	YEAR = {2023},
	NUMBER = {2},
	PAGES = {349--370},
}

@article {Chapoton1,
	AUTHOR = {Chapoton, Fr\'{e}d\'{e}ric},
	TITLE = {Sur le nombre d'intervalles dans les treillis de {T}amari},
	JOURNAL = {S\'{e}m. Lothar. Combin.},
	FJOURNAL = {S\'{e}minaire Lotharingien de Combinatoire},
	VOLUME = {55},
	YEAR = {2005/07},
	PAGES = {Art. B55f, 18},
}

@article {Chapoton2,
	AUTHOR = {Chapoton, Fr\'{e}d\'{e}ric},
	TITLE = {Une note sur les intervalles de {T}amari},
	JOURNAL = {Ann. Math. Blaise Pascal},
	FJOURNAL = {Annales Math\'{e}matiques Blaise Pascal},
	VOLUME = {25},
	YEAR = {2018},
	NUMBER = {2},
	PAGES = {299--314},
}

@article {BernardiBonichon,
	AUTHOR = {Bernardi, Olivier and Bonichon, Nicolas},
	TITLE = {Intervals in {C}atalan lattices and realizers of triangulations},
	JOURNAL = {J. Combin. Theory Ser. A},
	FJOURNAL = {Journal of Combinatorial Theory. Series A},
	VOLUME = {116},
	YEAR = {2009},
	NUMBER = {1},
	PAGES = {55--75},
}

@article {FusyHumbert,
	AUTHOR = {Fusy, \'{E}ric and Humbert, Abel},
	TITLE = {Bijections for generalized {T}amari intervals via orientations},
	JOURNAL = {European J. Combin.},
	FJOURNAL = {European Journal of Combinatorics},
	VOLUME = {117},
	YEAR = {2024},
	PAGES = {Paper No. 103826, 19},
}

@article {FangFusyNadeau,
	AUTHOR = {Fang, Wenjie and Fusy, \'{E}ric and Nadeau, Philippe},
	TITLE = {Tamari intervals and blossoming trees},
	JOURNAL = {Comb. Theory},
	FJOURNAL = {Combinatorial Theory},
	VOLUME = {5},
	YEAR = {2025},
	NUMBER = {1},
	PAGES = {Paper No. 4, 41},
}

@article {Loday,
	AUTHOR = {Jean-Louis~Loday},
	TITLE = {Realization of the {S}tasheff polytope},
	JOURNAL = {Arch.~Math.~(Basel)},
	FJOURNAL = {Archiv der Mathematik},
	VOLUME = {83},
	YEAR = {2004},
	NUMBER = {3},
	PAGES = {267--278},
}

@book {ShniderSternberg,
	AUTHOR = {Steve~Shnider and Shlomo~Sternberg},
	TITLE = {Quantum groups: From coalgebras to {D}rinfeld algebras},
	SERIES = {Series in Mathematical Physics},
	PUBLISHER = {International Press},
	ADDRESS = {Cambridge, MA},
	YEAR = {1993},
	PAGES = {592},
}

@article {Reading-arcDiagrams,
	AUTHOR = {Reading, Nathan},
	TITLE = {Noncrossing arc diagrams and canonical join representations},
	JOURNAL = {SIAM J. Discrete Math.},
	FJOURNAL = {SIAM Journal on Discrete Mathematics},
	VOLUME = {29},
	YEAR = {2015},
	NUMBER = {2},
	PAGES = {736--750},
}

@article {Barnard,
	AUTHOR = {Barnard, Emily},
	TITLE = {The canonical join complex},
	JOURNAL = {Electron. J. Combin.},
	FJOURNAL = {Electronic Journal of Combinatorics},
	VOLUME = {26},
	YEAR = {2019},
	NUMBER = {1},
	PAGES = {Paper No. 1.24, 25},
}

@article {BilleraSturmfels,
	AUTHOR = {Billera, Louis J. and Sturmfels, Bernd},
	TITLE = {Fiber polytopes},
	JOURNAL = {Ann. of Math. (2)},
	FJOURNAL = {Annals of Mathematics. Second Series},
	VOLUME = {135},
	YEAR = {1992},
	NUMBER = {3},
	PAGES = {527--549},
}

@article {PrevilleRatelleViennot,
	AUTHOR = {Pr\'{e}ville-Ratelle, Louis-Fran\c{c}ois and Viennot, Xavier},
	TITLE = {The enumeration of generalized {T}amari intervals},
	JOURNAL = {Trans. Amer. Math. Soc.},
	FJOURNAL = {Transactions of the American Mathematical Society},
	VOLUME = {369},
	YEAR = {2017},
	NUMBER = {7},
	PAGES = {5219--5239},
}

@article {FangPrevilleRatelle,
	AUTHOR = {Fang, Wenjie and Pr\'{e}ville-Ratelle, Louis-Fran\c{c}ois},
	TITLE = {The enumeration of generalized {T}amari intervals},
	JOURNAL = {European J. Combin.},
	FJOURNAL = {European Journal of Combinatorics},
	VOLUME = {61},
	YEAR = {2017},
	PAGES = {69--84},
}

@article {Schnyder,
	AUTHOR = {Schnyder, Walter},
	TITLE = {Planar graphs and poset dimension},
	JOURNAL = {Order},
	FJOURNAL = {Order. A Journal on the Theory of Ordered Sets and its Applications},
	VOLUME = {5},
	YEAR = {1989},
	NUMBER = {4},
	PAGES = {323--343},
}

@phdthesis{Ossona,
	AUTHOR = {Ossona de Mendez, Patrice},
	TITLE = {Orientations bipolaires},
	SCHOOL = {\'Ecole des Hautes \'Etudes en Sciences Sociales},
	YEAR = {1994},
}

@unpublished {Propp,
	AUTHOR = {James Propp},
	TITLE = {Lattice structure for orientations of graphs},
 	NOTE = {Preprint, \href{http://arxiv.org/abs/math/0209005}{\texttt{arXiv:math/0209005}}},
	YEAR = {1997},
}

@article {Felsner-latticesOrientations,
	AUTHOR = {Felsner, Stefan},
	TITLE = {Lattice structures from planar graphs},
	JOURNAL = {Electron. J. Combin.},
	FJOURNAL = {Electronic Journal of Combinatorics},
	VOLUME = {11},
	YEAR = {2004},
	NUMBER = {1},
	PAGES = {Research Paper 15, 24},
}

@article {Zeilberger-1990-FAP,
	AUTHOR = {Zeilberger, Doron},
	TITLE = {A fast algorithm for proving terminating hypergeometric identities},
	JOURNAL = {Discrete Math.},
	FJOURNAL = {Discrete Mathematics},
	VOLUME = {80},
	YEAR = {1990},
	NUMBER = {2},
	PAGES = {207--211},
}

@article {Zeilberger-1991-MCT,
	AUTHOR = {Zeilberger, Doron},
	TITLE = {The method of creative telescoping},
	JOURNAL = {J. Symbolic Comput.},
	FJOURNAL = {Journal of Symbolic Computation},
	VOLUME = {11},
	YEAR = {1991},
	NUMBER = {3},
	PAGES = {195--204},
}

@book {GouldenJackson,
	AUTHOR = {Goulden, Ian P. and Jackson, David M.},
	TITLE = {Combinatorial enumeration},
	NOTE = {With a foreword by Gian-Carlo Rota, Reprint of the 1983 original},
	PUBLISHER = {Dover Publications, Inc., Mineola, NY},
	YEAR = {2004},
	PAGES = {xxvi+569},
}

@Book{PetkovsekWilfZeilberger-1996-AB,
  author     = {Petkov{\v{s}}ek, Marko and Wilf, Herbert S. and Zeilberger, Doron},
  publisher  = {A K Peters Ltd.},
  title      = {{$A=B$}},
  year       = {1996},
  address    = {Wellesley, MA},
  isbn       = {1-56881-063-6},
  mrclass    = {05-01 (05A10 05A19 33C20 68R05)},
  mrnumber   = {1379802 (97j:05001)},
  mrreviewer = {Peter Paule},
  pages      = {xii+212},
}

@book {Felsner,
	AUTHOR = {Felsner, Stefan},
	TITLE = {Geometric graphs and arrangements},
	SERIES = {Advanced Lectures in Mathematics},
	NOTE = {Some chapters from combinatorial geometry},
	PUBLISHER = {Friedr. Vieweg \& Sohn, Wiesbaden},
	YEAR = {2004},
	PAGES = {x+170},
}

@unpublished {OEIS,
  	KEY = {{OEIS}},
	TITLE = {The {O}n-{L}ine {E}ncyclopedia of {I}nteger {S}equences},
	NOTE = {Published electronically at \url{http://oeis.org}},
	YEAR = {2010},
}

@unpublished {BostanChyzakPilaud,
	AUTHOR = {Alin Bostan and Fr\'ed\'eric Chyzak and Vincent Pilaud},
	TITLE = {Refined product formulas for {T}amari intervals},
 	NOTE = {Preprint, \href{http://arxiv.org/abs/2303.10986v3}{\texttt{arXiv:2303.10986v3}}}, 
	YEAR = {2024},
}
\label{sec:biblio}

\end{document}